\documentclass[12pt]{amsart}
\usepackage{amsmath,amssymb,amsfonts,a4wide,color,mathtools,url}
\usepackage[initials]{amsrefs}
\usepackage{cite}
\usepackage[utf8]{inputenc} 




\let\phi\varphi
\let\epsilon\varepsilon

\let\leq\leqslant
\let\geq\geqslant


\newtheorem{Thm}{Theorem}[section]
\newtheorem{Prop}[Thm]{Proposition}
\newtheorem{Lem}[Thm]{Lemma}
\newtheorem{Cor}[Thm]{Corollary}

\numberwithin{equation}{section}

%

\def\qed{{\hskip0pt\unskip\unskip\nobreak\hfil\penalty50
          \hskip1em\hbox{}\nobreak\hfil
           {$\square$}
          \parfillskip=0pt\finalhyphendemerits=0
          \par}\medskip}

               {\noindent{\bf Proof.}\ }
               {\qed}

               {\noindent{\bf Proof of #1.}\ }
               {\qed}


\hfuzz=3pt   


\begin{document}

\title{Computable Absolutely Pisot Normal Numbers}

\author[M. G. Madritsch]{Manfred G. Madritsch}
\address[M. G. Madritsch]{
\noindent 1. Universit\'e de Lorraine, Institut Elie Cartan de Lorraine, UMR 7502, Vandoeuvre-l\`es-Nancy, F-54506, France;\newline
\noindent 2. CNRS, Institut Elie Cartan de Lorraine, UMR 7502, Vandoeuvre-l\`es-Nancy, F-54506, France}
\email{manfred.madritsch@univ-lorraine.fr}

\author[A.-M. Scheerer]{Adrian-Maria Scheerer}
\address[A.-M. Scheerer]{Department for Analysis and Computational Number
  Theory\\Graz University of Technology\\A-8010 Graz, Austria}
\email{scheerer@math.tugraz.at}

\author[R. F. Tichy]{Robert F. Tichy}
\address[R. F. Tichy]{Department for Analysis and Computational Number
  Theory\\Graz University of Technology\\A-8010 Graz, Austria}
\email{tichy@tugraz.at}


\begin{abstract}
We analyze the convergence order of an algorithm producing the digits of an absolutely normal number. Furthermore, we introduce a stronger concept of absolute normality by allowing Pisot numbers as bases, which leads to expansions with non-integer bases.
\end{abstract}

\subjclass[2010]{11K16 (primary), 11Y16 (secondary)} 

\date{\today}

\maketitle

\section{Introduction}

In this paper we are interested in simultaneous normality to several bases. In particular, we analyze the order of convergence to normality of an absolutely normal number generated by an algorithm of Becher, Heiber and Slaman (Section~\ref{sec:discrepancy}) and are concerned with normality to real bases. We give an algorithmic construction of a real number that is normal to each base from a given sequence of Pisot numbers (Section~\ref{sec:abs_pisot} and Section~\ref{sec:ExplicitEstimates}).


\subsection{Normality to a single base} 

A real number $x\in[0,1)$ is called \emph{simply normal to base $b$}, $b\geq 2$ an integer, if in its $b$-ary expansion
\begin{equation*}
x=\displaystyle\sum_{n\geq 1}a_{n}b^{-n},\; a_{n}\in
\{0,\ldots , b-1\}
\end{equation*}
every digit $d \in \{0,1, \ldots b-1\}$ appears with the expected frequency, i.e. the limit 
\begin{equation*}
\lim_{n \to \infty} \frac{1}{N} \vert \{1\leq n \leq N : a_n = d\}\vert
\end{equation*}
exists and is equal to $\frac{1}{b}$.
The number $x$ is called \emph{normal to base $b$}, $b\geq 2$ an integer, if in its $b$-ary expansion all finite combinations of digits appear with the expected frequency, i.e. if for all $k\geq 1$ and all $d\in \{0,\ldots , b-1\}^{k}$,
\begin{equation}\label{limes_def_of_normality}
\lim_{N\rightarrow\infty}\frac{1}{N} \vert \{1 \leq n\leq N :(a_{n},\ldots , a_{n+k-1})=d \} \vert = \frac{1}{b^k}.
\end{equation}

Normal numbers were introduced by Borel~\cite{borel1909} in 1909. He showed that almost all real numbers (with respect to Lebesgue measure) are simply normal to all bases $b\geq2$, thus \emph{absolutely normal} (see Section~\ref{sec:abs_norm_and_order_of_convergence}).
It is a long standing open problem to show that important real
numbers such as $\sqrt{2}$,  $\ln 2, e, \pi, \ldots$ are normal, for
instance in decimal expansion. There has only been little progress in this direction in the last decades, see e.g. \cite{bailey2014:normality_of_constants}. 

However, specifically constructed examples of normal numbers are known.
Champernowne in 1935 \cite{champernowne} has shown that the real number constructed by concatenating the expansions in base $10$ of the positive integers, i.e.
\[0,1\,2\,3\,4\,5\,6\,7\,8\,9\,10\,11\dots,\]
is normal to base $10$. This construction has been extended in various directions (\textit{cf.} Erd{\H o}s and Davenport \cite{davenport_erdos:note_on_normal}, Schiffer
\cite{schiffer:discrepancy}, Nakai and Shiokawa \cite{nakai_shiokawa:class_of_normal_numbers}, Madritsch, Thuswaldner and Tichy \cite{madritsch_thuswaldner_tichy}, Scheerer \cite{scheerer2015:pisot}).

\subsection{Discrepancy of normal numbers} 
The \emph{discrepancy} of a sequence $(x_n)_{n\geq 1}$ of real numbers is defined as
\begin{equation*}
D_{N}(x_n)=\sup_{J} \left\vert \frac{1}{N} \vert \{1 \leq n\leq N: x_n \bmod 1 \in J\}\vert -\lambda(J)\right\vert,
\end{equation*}
where the supremum is extended over subintervals $J \subseteq [0,1)$ and where $\lambda$ denotes the Lebesgue measure. A sequence is \emph{uniformly distributed modulo $1$} if its discrepancy tends to zero as $N \rightarrow \infty$.

It is known \cite{wall1950:thesis} that $x$ is normal to base $b$ if and only if the sequence
$(b^{n}x)_{n\geq 1}$ is uniformly distributed modulo $1$.
Hence $x$ is normal to base $b$ if
and only
if $D_{N}(b^n x)\rightarrow 0$ as $N\rightarrow\infty$.
It is thus a natural quantitative measure for the normality of $x$ to base $b$ to consider the discrepancy of the sequence $(b^n x)_{n\geq 1}$.

Answering a question of Erd\H os,
in 1975 Philipp \cite{philipp1975:lacunary_limit}
has shown a law of the iterated logarithm for
discrepancies of lacunary sequences which implies $D_{N}(b^n x)=O(\sqrt{\log\log N / N})$ almost everywhere.
Recently, Fukuyama \cite{fukuyama2013:metric_discrepancy} was able to determine
\begin{equation*}
\limsup_{N\rightarrow\infty}\frac{D_{N}(b^nx)\sqrt{N}}{\sqrt{\log\log N}}= c (b)\quad\text{a.e.,}
\end{equation*}
for some explicit positive constant $c(b)$.
Schmidt \cite{schmidt1972:irreg_of_distr_7} showed that there is an absolute constant $c>0$ such that for any sequence $(x_n)_{n\geq 1}$ of real numbers $D_N(x_n) \geq c \frac{\log N}{N}$ holds for infinitely many $N$.
Schiffer \cite{schiffer:discrepancy} showed that the discrepancies of constructions of normal numbers in the spirit of Champernowne satisfy upper bounds of order $O(\frac{1}{\log N})$.
It is an open question whether there exist a real number $x$ and an integer $b \geq 2$ such that $D_N(b^n x) = O(\frac{\log N}{N})$.

\subsection{Absolute normality and order of convergence}
\label{sec:abs_norm_and_order_of_convergence}
A number $x$ is called \emph{absolutely normal} if it is normal to any integer base $b\geq 2$. Since normality to base $b$ is equivalent to simple normality to all bases $b^n$, $n\geq 1$, absolute normality is equivalent to simple normality to all bases $b\geq2$. 

Since most constructions of numbers normal to a single base $b$
are concatenations of the $b$-ary expansions of $f(n)$, $n\geq1$, where $f$ is a positive-integer-valued increasing function, they essentially depend on the choice of the base $b$. Therefore they cannot be used for producing absolutely normal numbers.

All known examples of absolutely normal numbers have been established in the form of algorithms\footnote{With the exception of Chaitin's constant, which is absolutely normal but not computable~\cite{chaitin}.} that output the digits of this number to some base one after the other.
The first such construction is due to Sierpinski \cite{sierpinski1917:borel_elementaire} from 1917. This construction was made computable by Becher and Figueira \cite{becher_figueira:2002} who gave
a recursive formulation of Sierpinski's construction.
Other algorithms for constructing absolutely normal numbers are due to Turing \cite{turing1992:collected_works} (see also Becher, Figueira and Picchi~\cite{becher_figueira_picchi2007:turing_unpublished}), Schmidt~\cite{schmidt1961:uber_die_normalitat} (see also Scheerer~\cite{scheerer2015:schmidt}) and Levin~\cite{levin1979:absolutely_normal} (see also Alvarez and Becher~\cite{alvarez_becher2015:levin}).



There seems to be a trade-off between the complexity of the algorithms and the speed of convergence of the corresponding discrepancies. 
The discrepancies satisfy upper bounds of the order $O(N^{-1/6})$ (Sierpinski), $O(N^{-1/16})$ (Turing), $O((\log N)^{-1})$ (Schmidt) and \allowbreak $O(N^{-1/2} (\log N)^3)$ (Levin). All algorithms, except the one due to Schmidt, need double exponential many mathematical operations to output the first $N$ digits of the produced absolutely normal number. Schmidt's algorithm requires exponentially many mathematical operations.

No construction of an absolutely normal number $x$ is known such that the discrepancy $D_N(b^n x)$ for some $b\geq 2$ decays faster than what one would expect for almost all $x$.\\

\noindent 
In Section~\ref{sec:discrepancy} we are interested in another construction of an absolutely normal number which is due to Becher, Heiber and Slaman \cite{becher_heiber_slaman2013:polynomial_time_algorithm}. They established an algorithm which computes the digits of an absolutely normal number in polynomial time. We show (Theorem~\ref{D_N_BHS}) that the corresponding discrepancy is  slightly worse than $O(\frac{1}{\log N})$, and that at a small loss of computational speed the discrepancy can in fact be $O(\frac{1}{\log N})$.

\subsection{Normality to non-integer bases}
Section~\ref{sec:abs_pisot} of the present article treats normality in a context where the underlying base is not necessarily integer. Let $\beta>1$ be a real number. 
Expansions of real numbers to base $\beta$, so-called \emph{$\beta$-expansions}, were introduced and studied by R\'{e}nyi \cite{renyi1957:reps_for_real_numbers}
and Parry \cite{parry1960:beta_expansions} and later by many authors from an arithmetic and ergodic-theoretic point of view. 

In the theory of $\beta$-expansions it is natural to consider \emph{Pisot numbers} $\beta$, i.e. real algebraic integers $\beta>1,$ such that all its conjugates lie inside the (open) unit disc. A real number $x$ is called \emph{normal to base $\beta$}, or $\beta$-normal, if the sequence
$\left(\beta^{n} x\right)_{n\geq 1}$ is uniformly distributed
modulo 1 with respect to the unique entropy maximizing measure for the underlying transformation $x \mapsto \beta x \bmod 1$ (see Section~\ref{Sec:defbetaexpansions}). A real number is called \textit{absolutely Pisot normal} if it is normal to all bases that are Pisot numbers. Since there are only countably many Pisot numbers, the Birkhoff ergodic theorem implies that almost all real numbers are in fact absolutely Pisot normal. \\


\noindent The main result of Section~\ref{sec:abs_pisot} is an algorithm that computes an absolutely Pisot normal number. More generally, for a sequence $(\beta_j)_{j\geq 1}$ of Pisot numbers, we construct a real number $x$ that is normal to each of the bases $\beta_j$, $j \geq 1$ (Section~\ref{sec:beta_algorithm} and Theorem~\ref{thm_beta_normal_algorithm}). 
Bearing in mind that the set of computable real numbers is countable, we thus show that there is in fact a computable real number that is $\beta_j$-normal for each $j \geq 1$.

Our algorithm constructs in each step a sequence of finitely many nested intervals, corresponding to the first finitely many bases considered. This is also the essential idea of the construction of an absolutely normal number by Becher, Heiber and Slaman \cite{becher_heiber_slaman2013:polynomial_time_algorithm}.
We need to establish lower and upper bounds for the length of $\beta$-adic subintervals in a given interval to control the number of specified digits when changing the base. 
However, the equivalence (absolute normality) $\Leftrightarrow$ (simple normality to all bases) does not hold for non-integer expansions. Instead, we argue with the concept of \emph{$(\varepsilon,k)$-normality} as introduced by Besicovitch \cite{besicovitch1935:epsilon} and studied in the case of Pisot numbers by Bertrand-Mathis and Volkmann \cite{bertrand-mathis_volkmann1989:epsilon_k_normal}.

Our algorithm should be compared to the one due to Levin~\cite{levin1979:absolutely_normal}. While his construction is not restricted to Pisot numbers, it uses exponential sums and is as such not realizable only with elementary operations. The algorithm we present in Section~\ref{sec:abs_pisot} is completely elementary.\\

\noindent In Section~\ref{sec:ExplicitEstimates} we give explicit estimates of all constants that appear in our algorithm. We use a theorem on large deviations for a sum of dependent random variables to give an estimate for the measure of the set of non-$(\epsilon,k)$-normal numbers of length $n$ (Proposition~\ref{non_normal_expl_constants}). Our approach gives all implied constants explicitly, and as such makes a consequence of the ineffective Shannon-McMillan-Breimann theorem effective. The results of this section might be of independent interest.

\subsection{Notation} 
For a real number $x$, we denote by $\lfloor x \rfloor$ the largest integer not exceeding $x$. The fractional part of $x$ is denoted as $\{x\}$, hence $x = \lfloor x \rfloor + \{x\}$. We put $\lceil x \rceil = - \lfloor -x \rfloor$. 
Two functions $f$ and $g$ are $f = O(g)$ or equivalently $f \ll g$ if there is a $x_0$ and a positive constant $C$ such that $f(x) \leq C g(x)$ for all $x\geq x_0$. We mean $\lim_{x\to\infty} f(x)/g(x) = 1$ when we say $f\sim g$ and $g\neq 0$.

When we speak of \emph{words}, we mean finite or infinite sequences of symbols (called letters) of a certain (specified) set, the alphabet. \emph{Blocks} are finite words. The concatenation of two blocks $u = u_1 \ldots u_k$ and $v_1 \ldots v_l$ is the block $u_1 \ldots u_k v_1 \ldots v_l$ and is denoted by $uv$ or $u\ast v$. If $u_i$ for $i\leq m$ are blocks, $\ast_{i<m} u_i$ is their concatenation in increasing order of $i$. The \emph{length} of the block $u = u_1 \ldots u_k$ is denoted by $\Vert u \Vert$ and is in this case equal to $k$.

We denote by $\lambda$ the Lebesgue measure. 

For a finite set, $\vert \cdot \vert$ means its number of elements.


\emph{Mathematical operations} include addition, subtraction, multiplication, division, comparison, exponentiation and logarithm. \emph{Elementary operations} take a fixed amount of time. The cost of mathematical operations depends on the digits of the input or on the desired precision of the output. Addition or subtraction of two $n$-digit numbers takes $O(n)$ elementary operations, multiplication or division of two $n$-digit numbers takes $O(n^2)$ elementary operations, and to compute the first $n$ digits of $\exp$ and $\log$ takes $O(n^{5/2})$ elementary operations. These estimates are crude but sufficient for our purposes. 

The complexity of a computable function $f$ is the time it takes to compute the first $N$ values $f(i)$, $1\leq i \leq N$. The algorithm we analyze 
outputs the digits of a real number $X$ to some base. By the complexity of the algorithm we mean the time it takes to output the first $N$ digits of $X$ to some base.

\section{Discrepancy}
\label{sec:discrepancy}

\noindent In this section, we analyze the speed of convergence to normality of the absolutely normal number produced by the algorithm by Becher, Heiber and Slaman in \cite{becher_heiber_slaman2013:polynomial_time_algorithm}. We follow the notation and terminology therein.

\subsection{The Algorithm}

\subsubsection*{Notation}
A \emph{$t$-sequence} is a nested sequence of intervals $\mathbf{I} = (I_2, \ldots, I_t)$, such that $I_2$ is dyadic and for each base $2\leq b \leq t-1$, $I_{b+1}$ is a $(b+1)$-adic subinterval of $I_b$ such that $\lambda(I_{b+1}) \geq \lambda(I_b)/2(b+1)$.

Let $x_b(\mathbf{I})$ be the block in base $b$ such that $0.x_b(\mathbf{I})$ is the representation of the left endpoint of $I_b$ in base $b$. In each step $i$, the algorithm computes a sequence $\mathbf{I}_i = (I_{i,2}, \ldots, I_{i,t_i})$ of nested intervals $I_{i,2} \supset \ldots \supset I_{i, t_i}$. If $b\leq t_i$, let $x_b(\mathbf{I}_i) = x_{i,b}$ be the base $b$ representation of the left endpoint of $I_{i,b}$ and let $u_{i+1,b} = u_b(\mathbf{I}_{i+1})$ be such that $x_{i+1,b} = x_{i,b} \ast u_{i+1,b}$.

If $u$ is a block of digits to base $b$, the \emph{simple discrepancy} of $u$ in base $b$ is defined as $D(u,b) = \max_{0\leq d < b} \vert N_d(u)/\Vert u \Vert - 1/b \vert$ where $N_d(u)$ is the number of times the digit $d$ appears in the block $u$.

Let $k(\epsilon, \delta, t)$ be the function
\begin{equation*}
k(\epsilon, \delta, t) = \max(\lceil 6 / \epsilon \rceil, \lceil - \log(\delta/(2t)) 6 / \epsilon^2 \rceil ) +1.
\end{equation*}

From Lemma 4.1 and 4.2 of \cite{becher_heiber_slaman2013:polynomial_time_algorithm} we further have a function $h$
that counts the number of mathematical operations needed to carry out one step of the algorithm. See also Lemma \ref{growth_h_i}.


\subsubsection*{Input} A computable non-decreasing unbounded function $f : \mathbb{N} \rightarrow \mathbb{R}$ such that $f(1)$ is known and satisfies $f(1) > h(2,1)$.

\subsubsection*{First step}

Set $t_1 = 2$, $\epsilon_1 = \frac{1}{2}$, $k_1 = 1$ and $\mathbf{I}_1 = (I_{1,2})$ with $I_{1,2} = [0,1)$.

\subsubsection*{Step $i+1$ for $i \geq 1$}
Given are from step $i$ of the algorithm values $t_i = v$, $\epsilon_i = \frac{1}{v}$ and a $t_{i}$-sequence $\mathbf{I}_{i}$.

We want to assign values to $t_{i+1}, \epsilon_{i+1}$. If $i+1$ is a power of $2$, then we carry out the following procedure.
\begin{itemize}

\item We spend $i$ computational steps on computing the first $m$ values of $f$, $1 \leq m \leq i$.
\item We put $\delta = 	(8 t_i 2^{t_i + v+1} t_i! (v+1)!)^{-1}$.
\item We try to compute $k(\frac{1}{v+1}, \delta, v+1)$ and $h(v+1, \frac{1}{v+1})$ in $i$ steps each. If we succeed in computing these values, and if additionally
\begin{equation}\label{cond_h}
h(v+1, \frac{1}{v+1}) < f(m)
\end{equation}
and for each $b\leq t_i$
\begin{equation}\label{cond_k}
\frac{\lceil \log_2(v+1) \rceil k(1/(v+1), \delta, v+1) + \lceil - \log_2(\delta) \rceil }{\Vert x_{i,b} \Vert} < \frac{1}{v+1},
\end{equation}
then we define $t_{i+1} = v+1$ and  $\epsilon_{i+1} = \frac{1}{v+1}$.
Otherwise, we let $t_{i+1} = t_i = v$, $\epsilon_{i+1} = \epsilon_i = \frac{1}{v}$.
\end{itemize}
If $i+1$ is no power of $2$, then define $t_{i+1} = t_i = v$, $\epsilon_{i+1} = \epsilon_i = \frac{1}{v}$.

Furthermore, we compute $\delta_{i+1} = (8 t_i 2^{t_i + t_{i+1}} t_i! t_{i+1}!)^{-1}$ and 
\begin{equation*}
k_{i+1}= \max(\lceil 6 / \epsilon_{i+1} \rceil, \lceil - \log(\delta_{i+1} /(2t_i)) 6 / \epsilon_{i+1}^2 \rceil ) +1.
\end{equation*}

Then we find a $t_{i+1}$-sequence $\mathbf{I}_{i+1}$ by means of the following steps.
\begin{itemize}
 \item We let $L$ be a dyadic subinterval of $I_{i,t_i}$ such that $\lambda(L) \geq \lambda(I_{i,t_i})/4$.
 \item For each dyadic subinterval $J_2$ of $L$ of measure $2^{-\lceil \log_2 t_i \rceil k_{i+1}} \lambda(L)$, we find $\mathbf{J} = (J_2, J_3, \ldots, J_{t_{i+1}})$, a $t_{i+1}$-sequence starting with $J_2$.
 \item Finally we choose $\mathbf{I}_{i+1}$ to be the leftmost of the $t_{i+1}$ sequences $\mathbf{J}$ considered above such that for each $b\leq t_i$, $D(u_b(\mathbf{J}),b) \leq \epsilon_{i+1}$.
\end{itemize}


\subsubsection*{Output}
Let $X$ be the unique real number in the intersection of the intervals of the sequences $\mathbf{I}_i$.
In base $b$ we have $X = \lim_{i \rightarrow \infty} 0.x_{i,b} = 0.\ast_{i \geq 1} u_{i,b}$. 
It is the content of Theorem 3.9 in \cite{becher_heiber_slaman2013:polynomial_time_algorithm} that $X$ is absolutely normal.

\subsection{Speed of convergence to normality}

In this section we estimate the discrepancy $D_N(b^n X)$ for integer $b \geq 2$.
Two factors play a role: How many digits in each step are computed, and how rapidly $\epsilon_i$ decays to zero. By virtue of the algorithm, at least one digit is added in each step, and $\epsilon_i$ can decay at most as fast  as $O(\frac{1}{\log i})$. As can be expected from the algorithm, the discrepancy depends both on growth and  complexity of $f$.

It was shown in~\cite{becher_heiber_slaman2013:polynomial_time_algorithm} that to output the first $N$ digits of $X$, the algorithm requires time $O(N^2 f(N))$.\\

\noindent We begin our analysis by first showing that in each step of the algorithm not too many digits are attached.

\begin{Lem}[Lemma 3.3 in \cite{becher_heiber_slaman2013:polynomial_time_algorithm}] \label{lowerboundintervals}
For an interval $I$ and a base $b$, there is a $b$-adic subinterval $I_b$ such that $\lambda(I_b) \geq \lambda(I)/(2b)$.
\end{Lem}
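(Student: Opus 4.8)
The plan is to produce $I_b$ by choosing a suitable level $k$ of the $b$-adic grid and then checking that some cell of that level fits inside $I$ while remaining long enough. Recall that the $b$-adic intervals of level $k\ge 0$ are the intervals $[mb^{-k},(m+1)b^{-k})$ with $m\in\mathbb{Z}$, and that for fixed $k$ they partition the line into pieces of length $b^{-k}$. Throughout the algorithm every interval under consideration is a subinterval of $[0,1)$, so we may assume $\lambda(I)\le 1$.

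First I would take $k$ to be the least non-negative integer with $b^{-k}\le \lambda(I)/2$. Such a $k$ exists, and $k\ge 1$, because $\lambda(I)/2\le 1/2 < 1 = b^{0}$. Minimality of $k$ gives $b^{-(k-1)}>\lambda(I)/2$, and dividing by $b$ shows that every level-$k$ $b$-adic interval has length
\[
b^{-k} \;>\; \frac{\lambda(I)}{2b},
\]
which is the lower bound we want. So it only remains to find \emph{one} level-$k$ cell contained in $I$.

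Write $I=[a,a+L)$ with $L=\lambda(I)\ge 2b^{-k}$ by the choice of $k$. A level-$k$ cell $[mb^{-k},(m+1)b^{-k})$ is contained in $I$ precisely when $mb^{-k}\in[a,\,a+L-b^{-k}]$. The closed interval $[a,\,a+L-b^{-k}]$ has length $L-b^{-k}\ge b^{-k}$, and any closed interval of length at least the grid spacing $b^{-k}$ contains a multiple of $b^{-k}$; choosing such a multiple $mb^{-k}$ yields a level-$k$ $b$-adic subinterval $I_b\subseteq I$. Together with the length bound above, this proves the lemma.

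I do not expect a genuine obstacle here: the argument is elementary. The only points needing a little care are the half-open convention for $b$-adic intervals (so the right endpoint of the chosen cell is allowed to equal $a+L$) and the remark that $\lambda(I)\le 1$ forces $k\ge 1$, which is what makes the comparison with $b^{-(k-1)}$ legitimate.
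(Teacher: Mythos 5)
Your argument is correct and is essentially the standard proof of this fact; the paper itself gives no proof here but simply cites Lemma~3.3 of Becher--Heiber--Slaman, whose argument is the same grid-level selection you use (choose the least $k$ with $b^{-k}\le\lambda(I)/2$, note $b^{-k}>\lambda(I)/(2b)$ by minimality, and find a level-$k$ cell inside $I$ since $\lambda(I)\ge 2b^{-k}$). The only implicit assumption, as in the paper's $\beta$-adic analogue (Lemma~\ref{inscribed_interval_beta}), is $\lambda(I)>0$, which holds for all intervals arising in the algorithm.
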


\begin{Lem}\label{length_of_blocks_in_BHS}
If $i$ is large enough, then $ 1 \leq \Vert u_{i,b} \Vert \ll (\log i)^A$ for $A>3$. Thus $i \ll \Vert x_{i,b} \Vert \ll i (\log i)^A$.
\end{Lem}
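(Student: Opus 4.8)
The plan is to track how many digits $\Vert u_{i,b}\Vert$ can possibly be appended in step $i+1$ of the algorithm, for a fixed base $b$, once $i$ is large enough that $b\le t_i$. The key observation is that the new block $u_{i+1,b}$ records the base-$b$ digits between the left endpoint of $I_{i,b}$ and the left endpoint of $I_{i+1,b}$, so its length is governed by the ratio $\lambda(I_{i,t_i})/\lambda(I_{i+1,t_{i+1}})$, together with the $t$-sequence size bound $\lambda(I_{c+1})\ge\lambda(I_c)/2(c+1)$ that links the $b$-adic scale to the $t_i$-adic scale. Concretely, I would first bound from below $\Vert x_{i,b}\Vert$: since at least one base-$b$ digit is added in every step (the interval strictly shrinks at every level when passing through the $t$-sequence), one gets $\Vert x_{i,b}\Vert\ge i - O(1)$, giving the left half of the asserted inequality $i\ll\Vert x_{i,b}\Vert$ immediately.

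For the upper bound on $\Vert u_{i,b}\Vert$, I would chase through the interval-construction steps of step $i+1$. We pass from $I_{i,t_i}$ to a dyadic subinterval $L$ with $\lambda(L)\ge\lambda(I_{i,t_i})/4$, then to a dyadic $J_2\subseteq L$ of measure $2^{-\lceil\log_2 t_i\rceil k_{i+1}}\lambda(L)$, and then complete $J_2$ to a $t_{i+1}$-sequence, which by the $t$-sequence condition loses at most a factor $\prod_{c=2}^{t_{i+1}-1}2(c+1)\le (2t_{i+1})^{t_{i+1}}$ more in measure. Hence $\lambda(I_{i+1,t_{i+1}})/\lambda(I_{i,t_i})$ is bounded below by a quantity of the form $2^{-O(t_i k_{i+1})}\cdot(2t_{i+1})^{-t_{i+1}}$. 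Converting measures to base-$b$ digit counts (each base-$b$ digit contributes a factor $b$ to the measure, up to the bounded distortion from Lemma~\ref{lowerboundintervals} at the endpoints), we get
\[
\Vert u_{i+1,b}\Vert \ll t_i\, k_{i+1} + t_{i+1}\log t_{i+1} + O(1).
\]
Now I plug in the algorithm's parameters: $t_i\ll\log i$ (since $t$ increases by at most one, and only at powers of $2$, so $t_i=O(\log i)$), $\epsilon_{i+1}=1/t_{i+1}\gg 1/\log i$, and $\delta_{i+1}=(8t_i2^{t_i+t_{i+1}}t_i!t_{i+1}!)^{-1}$, so that $-\log\delta_{i+1}\ll t_i\log t_i\ll(\log i)\log\log i$. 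Then $k_{i+1}=\max(\lceil 6/\epsilon_{i+1}\rceil,\lceil -\log(\delta_{i+1}/(2t_i))6/\epsilon_{i+1}^2\rceil)+1\ll t_{i+1}^2\log\log i\ll(\log i)^2\log\log i$. Feeding this back, $\Vert u_{i+1,b}\Vert\ll(\log i)\cdot(\log i)^2\log\log i\ll(\log i)^3\log\log i\ll(\log i)^A$ for any fixed $A>3$, which is the claimed bound. Summing, $\Vert x_{i,b}\Vert=\sum_{j\le i}\Vert u_{j,b}\Vert\ll i(\log i)^A$, completing the right-hand inequality.

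The main obstacle is the bookkeeping in the measure-to-digit-length conversion: one must be careful that the dyadic/$b$-adic subinterval choices do not accumulate more than a bounded multiplicative distortion per step at the endpoints, so that $\lambda(I)\asymp b^{-\Vert x_b(I)\Vert}$ up to a factor bounded independently of $i$ — this is exactly what Lemma~\ref{lowerboundintervals} guarantees at each level, but the argument needs the number of levels (namely $t_i-1=O(\log i)$) to enter only inside the logarithm, which it does. Once that is set up, the rest is just substituting the explicitly given formulas for $\delta_{i+1}$, $k_{i+1}$ and the crude growth $t_i=O(\log i)$, and observing that the dominant term is of order $(\log i)^3\log\log i$, comfortably below $(\log i)^A$ for $A>3$.
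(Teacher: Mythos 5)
Your overall strategy is the same as the paper's: follow the chain $I_{i,b}\supset\cdots\supset I_{i,t_i}\supset L\supset I_{i+1,2}\supset\cdots\supset I_{i+1,b}$, bound the measure ratio $\lambda(I_{i+1,b})/\lambda(I_{i,b})$ from below using Lemma~\ref{lowerboundintervals} and the $t$-sequence condition, convert to a base-$b$ digit count, and substitute $t_i=O(\log i)$ together with the explicit formulas for $\delta_{i+1}$ and $k_{i+1}$. The lower bound $i\ll\Vert x_{i,b}\Vert$ via ``at least one digit per step'' is also exactly the paper's argument.

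However, there is a concrete error in your estimate of $k_{i+1}$. You correctly note $-\log\delta_{i+1}\ll t_i\log t_i\ll(\log i)\log\log i$, but then conclude $k_{i+1}\ll t_{i+1}^2\log\log i\ll(\log i)^2\log\log i$, silently dropping the factor $t_i$. Since $k_{i+1}\asymp\bigl(-\log(\delta_{i+1}/(2t_i))\bigr)\cdot 6/\epsilon_{i+1}^2\asymp t_{i+1}^2\, t_i\log t_i$, the correct order is $k_{i+1}=O\bigl((\log i)^3\log\log i\bigr)$, matching the paper's $O(t_i^3\log t_i)$. This slip is masked in your write-up by a compensating over-estimate: you replace the exact exponent $\lceil\log_2 t_i\rceil k_{i+1}$ in $\lambda(I_{i+1,2})=2^{-\lceil\log_2 t_i\rceil k_{i+1}}\lambda(L)$ by $O(t_i k_{i+1})$, which costs an extra factor $t_i/\log t_i\approx\log i/\log\log i$. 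With the corrected $k_{i+1}$ your bound $t_i k_{i+1}$ becomes $(\log i)^4\log\log i$, which does \emph{not} give $\ll(\log i)^A$ for $3<A\leq 4$ as the lemma claims. The fix is to keep the sharper factor $\lceil\log_2 t_i\rceil$ rather than $t_i$ (as the paper does): then the dominant term is $(\log t_i)k_{i+1}=O\bigl((\log i)^3(\log\log i)^2\bigr)$, which is $\ll(\log i)^A$ for any fixed $A>3$. As a minor point, your intermediate claim $\Vert u_{i+1,b}\Vert\ll(\log i)^3\log\log i$ is in fact false as stated (the $J_2$ step alone forces $\asymp(\log i)^3(\log\log i)^2$ digits), even though it does not affect the truth of the lemma.
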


\begin{proof}
We assume the base $b$ to be fixed and $i$ large enough such that $t_{i+1} \geq b$. In step $i+1$ we have the following sequence of nested subintervals:
\begin{equation}\label{nested}
I_{i,b} \supset \ldots \supset I_{i, t_i} \supset L \supset I_{i+1,2} \supset \ldots \supset I_{i+1,b}.
\end{equation}
By Lemma \ref{lowerboundintervals}, and the choice of $I_{i+1,2}$, we know the following lower bounds on the measures of the intervals in \eqref{nested}. We have $\lambda(I_{i,t_i}) \geq \lambda(I_{i,b}) / (2^{t_i-b} t_i!/b!)$, $\lambda(L) \geq \lambda(I_{i,t_i})/4$, $\lambda(I_{i+1,2}) = 2^{-\lceil \log_2 t_i \rceil k_{i+1}} \lambda(L)$ and $\lambda(I_{i+1,b}) \geq \lambda(I_{i+1,2}) / (2^b b!)$. Combining inequalities yields $\lambda(I_{i+1,b}) \geq \lambda(I_{i,b}) / ( 2^{2+t_i} 2^{\lceil \log_2 t_i \rceil k_{i+1}} t_i!)$. Hence in stage $i+1$ we are adding at most $O(t_i + (\log t_i) k_{i+1} + \log t_i!)$ many digits in base $b$. The way the algorithm is designed only allows for $t_i = O(\log i)$. The growth of $k_{i+1}$ can be analyzed and is $O(t_i^3 \log t_i)$. Hence in stage $i+1$ at most $O(t_i + (\log t_i) k_{i+1} + \log t_i!) = O((\log i)^A)$ digits are added to the $b$-ary expansion of $X$, where $A>3$ to accommodate all double-log factors. 

The lower bound on the number of digits added comes from the fact that by the choice of $I_{i+1,2}$, $I_{i+1,b}$ is strictly smaller than $I_{i,b}$, so at least one digit is added in each stage.
\end{proof}

\noindent Next, we investigate the conditions involving $k$ and $h$ that are responsible for how fast $t_i \rightarrow \infty$ and $\epsilon_i \rightarrow 0$ with step $i$ of the algorithm. We start by showing that condition \eqref{cond_k} on $k$ always holds, provided $i$ is large enough. This involves estimating the growth as well as the complexity of $k$. 

Recall that $k(\epsilon, \delta, t) = \max(\lceil 6 / \epsilon \rceil, \lceil - \log(\delta/(2t)) 6 / \epsilon^2 \rceil ) +1$.

\begin{Lem}
Let $v \geq 2$ be an integer and $\delta =   (8 v 2^{2v+1} v! (v+1)!)^{-1}$. Then the growth of $k(\frac{1}{v+1},\delta, v+1)$ is $O(v^3 \log v)$. Furthermore, $k(\frac{1}{v+1},\delta, v+1)$ can be computed in $O(v^2(\log v)^2)$ elementary operations.
\end{Lem}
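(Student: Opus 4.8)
The statement has two parts: a growth estimate $k(\tfrac{1}{v+1},\delta,v+1)=O(v^3\log v)$, and a complexity estimate that this quantity can be computed in $O(v^2(\log v)^2)$ elementary operations. I would treat them in that order.

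\textbf{Growth estimate.} Recall $k(\epsilon,\delta,t)=\max(\lceil 6/\epsilon\rceil,\lceil -\log(\delta/(2t))\,6/\epsilon^2\rceil)+1$. With $\epsilon=\tfrac{1}{v+1}$ we have $6/\epsilon=6(v+1)=O(v)$ and $6/\epsilon^2=6(v+1)^2=O(v^2)$. The only nontrivial ingredient is the size of $-\log(\delta/(2t))=-\log\delta+\log(2(v+1))$. Here $\delta^{-1}=8v\,2^{2v+1}v!(v+1)!$, so $-\log\delta=\log 8+\log v+(2v+1)\log 2+\log v!+\log(v+1)!$. The dominant term is $\log v!$, and by Stirling $\log v!=v\log v+O(v)$, whence $-\log\delta = O(v\log v)$; the added $\log(2(v+1))$ is negligible. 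Multiplying the $O(v^2)$ factor $6/\epsilon^2$ by the $O(v\log v)$ logarithmic factor gives the second argument of the max as $O(v^3\log v)$, which dominates the first argument $O(v)$; adding $1$ and taking ceilings changes nothing. This yields $k(\tfrac{1}{v+1},\delta,v+1)=O(v^3\log v)$.

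\textbf{Complexity estimate.} Here I would count elementary operations needed to evaluate the closed-form expression, using the cost model fixed in the Notation section (addition of $n$-digit numbers costs $O(n)$, multiplication/division costs $O(n^2)$, and computing $n$ digits of $\log$ costs $O(n^{5/2})$). The key observation is that every integer appearing — $v$, $v+1$, $8$, $2^{2v+1}$, $v!$, $(v+1)!$ — has $O(v\log v)$ binary digits (again by Stirling for the factorials, and $2v+1$ digits for the power of two). Forming $\delta^{-1}$ requires $O(v)$ multiplications of numbers with $O(v\log v)$ digits, at cost $O((v\log v)^2)$ each, giving $O(v^3(\log v)^2)$; this would be too expensive, so the honest route is to compute $\log(\delta^{-1})$ directly as the sum $\log 8+\log v+(2v+1)\log 2+\log v!+\log(v+1)!$, using $\log v!=\sum_{j\le v}\log j$, which is $O(v)$ logarithm-evaluations and $O(v)$ additions on numbers with $O(\log v)$ digits of precision needed. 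Each $\log$ of an $O(\log v)$-bit number to the required precision costs $O((\log v)^{5/2})$, the additions cost $O(v\log v)$ in total, and then multiplying by $6/\epsilon^2=6(v+1)^2$ (an $O(\log v)$-bit number) is a single multiplication. The overall count comes out to $O(v(\log v)^{5/2})$, which is within the claimed $O(v^2(\log v)^2)$ bound; I would simply bound the former by the latter rather than optimize.

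\textbf{Main obstacle.} The only real subtlety is bookkeeping the bit-lengths consistently — in particular recognizing that one should never form the gigantic integer $\delta^{-1}$ explicitly but instead work with its logarithm termwise — and applying Stirling carefully enough to see that the $\log v!$ contribution is exactly what produces the extra factor of $\log v$ (versus the naive $O(v^3)$ one might expect if $\delta$ were bounded below by a simple exponential). Everything else is a routine application of the stated cost model, so I would keep the argument terse: state the bit-length bounds, state which operations are performed and how many, multiply, and conclude.
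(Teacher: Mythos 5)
Your growth estimate is exactly the paper's argument: bound the second entry of the max by $6(v+1)^2\bigl(\log(16v(v+1))+(2v+1)\log 2+\log v!+\log(v+1)!\bigr)+2$ and use $\log v! = O(v\log v)$ to get $O(v^3\log v)$. For the complexity estimate you take a genuinely different route. The paper \emph{does} form the large integer $16v(v+1)2^{2v+1}v!(v+1)!$ explicitly, asserting this costs $O(v^2(\log v)^2)$ elementary operations (which is right if one accounts for the unbalanced multiplications as $O(nm)$ rather than padding both factors to $O(v\log v)$ digits, as your pessimistic estimate $O(v^3(\log v)^2)$ does), and then extracts the $O(\log v)$ significant digits of a single logarithm for another $O((\log v)^{5/2})$; the big-integer arithmetic is the bottleneck and exactly matches the stated bound. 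You instead never form $\delta^{-1}$ and compute $\log(1/\delta)$ termwise via $\log v!=\sum_{j\le v}\log j$, which under the paper's cost model gives the sharper bound $O(v(\log v)^{5/2})$, weakened at the end to the stated $O(v^2(\log v)^2)$. Both are correct; yours is more careful about avoiding large intermediate operands and actually shows the lemma's complexity bound is not tight, while the paper's is a more direct reading of its own cost model. (Both arguments share the same unaddressed subtlety, namely that recovering a ceiling exactly from a finite-precision logarithm requires the argument not to be too close to an integer, so this is not a gap relative to the paper.)
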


\begin{proof}
We have for the growth
\begin{align*}
k(\frac{1}{v+1}, \delta, v+1) &= \max(\lceil 6(v+1) \rceil, \lceil \log(2(v+1) 8v 2^{2v+1} v! (v+1)!)6(v+1)^2\rceil) + 1\\
&\leq 6(v+1)^2 \left( \log(16v(v+1)) + (2v+1) \log 2 + \log v! + \log(v+1)! \right) + 2 \\
&= O(v^2(\log v + v + v \log v)) \\
&= O (v^3 \log v).
\end{align*}
Since in the expression for $k$ we are rounding, the most relevant part is the computation of the significant digits of $\log(16v(v+1) 2^{2v+1} v! (v+1)!)$. The argument of this expression is computable with $O(v^2(\log v)^2)$ elementary operations and has $O(v\log v)$ many digits. We only need to compute $O(\log v)$ many digits of the logarithm, which takes another $O((\log v)^{5/2})$ elementary operations. In total this are $O(v^2(\log v)^2)$ many elementary operations.
\end{proof}

\begin{Cor}
For $i$ to be large enough, condition~\eqref{cond_k} on $k$ is always satisfied, i.e. for each $b\leq t_i$
\begin{equation*}
\frac{\lceil \log(v+1) \rceil k(1/(v+1), \delta, v+1) + \lceil - \log(\delta) \rceil }{\Vert x_{i,b} \Vert} < \frac{1}{v+1}
\end{equation*}
where $v$ is such that $t_i = v = 1/\epsilon_i$.
\end{Cor}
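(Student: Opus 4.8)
\textit{Proof proposal.}
The plan is to bound the numerator of the fraction in \eqref{cond_k} from above, bound the denominator from below, and check that the quotient decays to zero faster than $1/(v+1)$ (with $v = t_i = 1/\epsilon_i$), so that the inequality holds once $i$ is large.

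First I would estimate the numerator. By the preceding Lemma, with $\delta = (8v\,2^{2v+1}\,v!\,(v+1)!)^{-1}$ the growth of $k(1/(v+1),\delta,v+1)$ is $O(v^3\log v)$. Next, $\lceil -\log \delta\rceil = \lceil \log(8v) + (2v+1)\log 2 + \log v! + \log(v+1)!\rceil = O(v\log v)$ by Stirling, and $\lceil \log(v+1)\rceil = O(\log v)$. Hence the numerator of \eqref{cond_k} is $O(v^3\log v)\cdot O(\log v) + O(v\log v) = O(v^3(\log v)^2)$.

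Then I would bring in two structural facts. The value $t_i$ is increased by at most one, and only at steps $i$ that are powers of two, so starting from $t_1 = 2$ one gets $v = t_i = O(\log i)$ (this is the same bound used in the proof of Lemma \ref{length_of_blocks_in_BHS}). And by Lemma \ref{length_of_blocks_in_BHS} itself, $\Vert x_{i,b}\Vert \gg i$ for every fixed base $b \leq t_i$. Combining, the left-hand side of \eqref{cond_k} is $O\bigl(v^3(\log v)^2/i\bigr) = O\bigl((\log i)^3(\log\log i)^2/i\bigr)$, so multiplying by $v+1 = O(\log i)$ yields $O\bigl((\log i)^4(\log\log i)^2/i\bigr)\to 0$ as $i\to\infty$. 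Therefore the left-hand side of \eqref{cond_k} is eventually strictly less than $1/(v+1)$, which is the claim.

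The main point — it is hardly an obstacle — is simply to make the three ingredients fit together: the polylogarithmic-in-$v$ growth of the numerator (supplied by the previous Lemma), the bound $t_i = O(\log i)$ coming purely from the structure of the algorithm, and the lower bound $\Vert x_{i,b}\Vert \gg i$ from Lemma \ref{length_of_blocks_in_BHS}. One should also be careful that the $\delta$ appearing here is the one from the ``try to compute'' sub-step, namely $\delta = (8 t_i\,2^{2t_i+1}\,t_i!\,(t_i+1)!)^{-1}$, so that the previous Lemma applies directly.
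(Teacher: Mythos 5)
Your proposal is correct and follows essentially the same route as the paper: the paper's proof likewise cites $k(1/(v+1),\delta,v+1)=O(v^3\log v)$, $\log(1/\delta)=O(v\log v)$, $\Vert x_{i,b}\Vert\gg i$, and $t_i=O(\log i)$ and concludes directly. You merely spell out the intermediate combination $O\bigl((\log i)^4(\log\log i)^2/i\bigr)\to 0$, which the paper leaves implicit.
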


\begin{proof}
This is a consequence of $k(1/(v+1), \delta, v+1) = O(v^3 \log v)$, $\log(1/\delta) = O(v\log v)$, $\Vert x_{i,b}\Vert \gg i$ and $v= t_i = O(\log i)$ by the way the algorithm is designed.
\end{proof}


\noindent Now we investigate condition \eqref{cond_h} on $h$ involving $f$. The function $h$ counts the number of mathematical operations needed to carry out one step of the algorithm. We want to know an upper bound for the growth of $h$.




%

\begin{Lem}\label{growth_h_i}
With $t_i = \frac{1}{\epsilon_i} = O(\log i)$ we have
\begin{equation*}
h(t_i, \epsilon_i) = O(i^{\log^4 i}).
\end{equation*}
This upper bound for $h$ can be computed with $i$ elementary operations, provided $i$ is large enough.
\end{Lem}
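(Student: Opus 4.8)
The plan is to bound $h(t_i,\epsilon_i)$ by tracking the size of the largest quantities that enter a single step of the algorithm, and then to separately argue that this (crude) upper bound is cheap to compute.

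First I would recall that by Lemma~4.1 and~4.2 of~\cite{becher_heiber_slaman2013:polynomial_time_algorithm}, $h(t,\epsilon)$ is essentially governed by the cost of the search over dyadic subintervals $J_2$ of $L$ and the subsequent completion to a $t$-sequence: one examines $2^{\lceil\log_2 t\rceil k}$ candidate intervals, where $k = k(\epsilon,\delta,t)$ with $\delta$ of the shape occurring in the algorithm, and for each candidate one computes the relevant blocks and simple discrepancies to bases $b\le t$. Each such block has length $O(t + (\log t)k + \log t!)$, and the number of mathematical operations to produce and test it is polynomial in that length and in $t$. Hence $h(t,\epsilon) = 2^{O((\log t)\,k)}\cdot \mathrm{poly}(t,k)$. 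With $\epsilon = \epsilon_i = 1/t_i$ and the previous lemma giving $k = k(1/t_i,\delta,t_i) = O(t_i^3\log t_i)$, we get $(\log t_i)\,k = O(t_i^3 (\log t_i)^2)$, so $h(t_i,\epsilon_i) = 2^{O(t_i^3(\log t_i)^2)}$. Substituting $t_i = O(\log i)$ yields $h(t_i,\epsilon_i) = 2^{O((\log i)^3(\log\log i)^2)} = O(i^{\log^4 i})$, since $(\log i)^3(\log\log i)^2 = o((\log i)^4)$ and $2^{(\log i)^4} = i^{(\log i)^3/\log 2}$; one should choose the exponent $\log^4 i$ with a little room (just as in Lemma~\ref{length_of_blocks_in_BHS}, the extra power absorbs the $\log\log$ factors and constants).

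For the computational cost of \emph{this upper bound}: the bound is of the form $i^{\lfloor\log^4 i\rfloor}$ (or we may simply define the computed bound to be $2^{\lceil(\log i)^4\rceil}$), and to compute it we only need $\log i$, one exponentiation to produce $(\log i)^4$, a ceiling, and then an exponentiation of $2$ to that power — or equivalently we never need to write out the full integer, only to know it symbolically for use in comparison~\eqref{cond_h}. Computing $\lceil(\log i)^4\rceil$ requires $O((\log\log i)^{5/2})$ elementary operations for the logarithm and $O((\log\log i)^2)$ for the fourth power, which is $\ll i$ for $i$ large. So the claimed ``$i$ elementary operations'' is immediate once $i$ is large enough.

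The main obstacle is pinning down the exact dependence of Becher--Heiber--Slaman's $h$ on its arguments: their Lemma 4.1/4.2 is stated for their specific construction, and I must check that the dominant term is indeed the $2^{(\log t)k}$ from the exhaustive interval search rather than, say, the cost of computing $\log t!$ to the needed precision or of locating the $b$-adic refinements. Once the $2^{O((\log t)k)}\cdot\mathrm{poly}$ shape is confirmed, the substitution $k = O(t^3\log t)$, $t = O(\log i)$ is routine bookkeeping of nested logarithms, exactly of the flavour already carried out in Lemma~\ref{length_of_blocks_in_BHS}, and the chosen exponent $\log^4 i$ is deliberately loose to swallow all such lower-order factors.
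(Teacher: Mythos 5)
Your proposal follows essentially the same route as the paper: both reduce to the decomposition of $h$ from Lemmas 4.1--4.2 of \cite{becher_heiber_slaman2013:polynomial_time_algorithm}, identify the exhaustive search over the $2^{\lceil\log_2 t\rceil k}$ candidate dyadic intervals as the dominant factor, substitute $k=O(t^3\log t)$ and $t_i=O(\log i)$, and observe that evaluating the bound $i^{\log^4 i}$ costs only $\mathrm{poly}(\log i)$ elementary operations. One small correction: the non-exponential factor is not $\mathrm{poly}(t,k)$ as you write, since it contains the term $g=O(i^2(\log i)^{2A})$ (the squared digit-length of the interval endpoints accumulated over $i$ steps), which is polynomial in $i$ rather than in $t_i$ and $k$; this does not affect the conclusion, because $\mathrm{poly}(i)$ is still absorbed by the $2^{O((\log i)^3(\log\log i)^2)}$ search factor, but it is exactly why the paper remarks that $g$ would become the dominant term if $t_i$ were forced to grow more slowly than $\log i$.
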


\begin{proof}
The function $h$ decomposes as $h = h_\ast ( h_1 g + h_2 + h_3 + h_4) h_0$ as can be seen from the proof of Lemma 4.2 in \cite{becher_heiber_slaman2013:polynomial_time_algorithm}.
Here: 
\begin{itemize} 
\item  $g$ (from Lemma 4.1 in \cite{becher_heiber_slaman2013:polynomial_time_algorithm}), is the minimum number of digits sufficient to represent all the endpoints of the intervals that we are working with in one step (squared). We know from Lemma~\ref{length_of_blocks_in_BHS} that $g = O( i^2 (\log i)^{2A})$ for $A>3$. 

\item It takes $h_1 g$ many mathematical operations to find a $t_{i+1}$-sequence for each $J_2$. We have $h_1 = t_{i+1}$.

\item $h_2$ is the number of mathematical operations needed to compute the base $b$ representation $u_b(\mathbf{J})$ for each $2 \leq b \leq t_i$. We have $h_2 \leq \lceil \log_2 t_i \rceil k_{i+1}$.

\item $h_3$ counts the number of mathematical operations needed to compute thresholds of the form $(1/b + \epsilon_{i+1}) \Vert u_b(\mathbf{J}) \Vert$. We have $h_3 = t_i$.

\item $h_4$ comes from counting occurrences of digits in $u_b(\mathbf{J})$ and comparing with the previously computed thresholds. We have $h_4 \ll t_i (\lceil \log_2 t_i \rceil k_{i+1})^2$. 

\item $h_\ast$ is the maximum number of iterations it takes to find a suitable $t_{i+1}$-sequence. There are $2^{\lceil \log_2 t_i\rceil k_{i+1}}$ many different subintervals $J_2$ of $L$, hence $h_\ast = 2^{\lceil \log_2 t_i\rceil k_{i+1}}$. With $k_{i+1} = O(\log^4 i)$ we obtain $h_\ast = O(i^{\log^4 i})$. 

\item Finally, the function $h_0$ is the number of elementary operations needed to carry out each mathematical operation in one step of the algorithm. Since all values that appear in the calculations of one step of the algorithm are at most exponential in $t_i$ which is at most of order $\log i$, and because the number of elementary operations involved depends only on the number of digits of the numbers involved, $h_0$ is at most of order $\text{poly}(\log i)$.
\end{itemize}

These bounds can be seen from Lemma~4.1 and Lemma~4.2 in~\cite{becher_heiber_slaman2013:polynomial_time_algorithm}. Combining them gives $h = O(i^{\log^4 i})$.

Remark that, when $t_i$ is bounded by a slower growing function in $i$ such as $\log \log i$, then the significant term in $h$ comes from $g$ and is a power of $i$. Otherwise $h_\ast$ is the significant term.

For the complexity of the upper bound for $h$, note that $i^{\log^4 i}$ can be computed in a power of $\log i$ many elementary operations, so certainly with $i$ elementary operations when $i$ is large enough.
\end{proof}

\noindent Lemma \ref{growth_h_i} has the following two immediate corollaries for the speed of convergence to normality of Becher, Heiber and Slaman's algorithm.

\begin{Prop}
Becher, Heiber, Slaman's algorithm achieves discrepancy of $D_N(b^n X) = O(\frac{1}{\log N})$ for $f$ computable in real-time with growth $f \gg i^{\log^4 i}$. In this case, the complexity is $O(i^{2+ \log^4 i})$.
\end{Prop}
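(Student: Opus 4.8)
The plan is to combine the two conditions that govern when the algorithm is allowed to increase $t_i$ (equivalently, decrease $\epsilon_i$): condition \eqref{cond_h} on $h$ and condition \eqref{cond_k} on $k$. The Corollary already shows that \eqref{cond_k} holds for all large $i$, so the binding constraint is \eqref{cond_h}, namely $h(v+1,\tfrac{1}{v+1}) < f(m)$ where $m\le i$ is the number of values of $f$ computed so far. By Lemma~\ref{growth_h_i} we have $h(t_i,\epsilon_i) = O(i^{\log^4 i})$ and moreover this bound (hence a usable value of $h$) can be computed in $i$ elementary operations once $i$ is large. The first thing I would do is make precise what ``$f$ computable in real-time'' buys us: if computing the first $m$ values of $f$ takes $O(m)$ elementary operations, then the $i$ computational steps the algorithm spends produce $m \gg i$ values of $f$, so effectively $f(m)$ with $m$ of order $i$ is available at step $i$.

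Next I would show that the hypothesis $f \gg i^{\log^4 i}$ forces $t_i$ to increase at essentially every power of $2$. At a step $i+1$ that is a power of $2$, we need $h(v+1,\tfrac1{v+1}) < f(m)$ with $v = t_i$. Since $t_i = O(\log i)$ by design, Lemma~\ref{growth_h_i} gives $h(v+1,\tfrac1{v+1}) = O(i^{\log^4 i})$ as well, and by hypothesis $f(m) \gg m^{\log^4 m} \gg i^{\log^4 i}$ (using $m \gg i$ and monotonicity), so for $i$ large the inequality \eqref{cond_h} is satisfied whenever we can actually compute the relevant values in $i$ steps each — which Lemma~\ref{growth_h_i} and the previous Lemma on $k$ guarantee. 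Hence $t_{i+1} = t_i + 1$ at every sufficiently large power of $2$, which yields $t_i \gg \log i$ and therefore $\epsilon_i \ll \tfrac{1}{\log i}$, i.e. $\epsilon_i = O(\tfrac{1}{\log i})$. Combined with the trivial lower bound $\epsilon_i \gg \tfrac{1}{\log i}$ coming from $t_i = O(\log i)$, we get $\epsilon_i \asymp \tfrac{1}{\log i}$.

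From here I would translate the decay of $\epsilon_i$ into a discrepancy bound. By the design of the algorithm, for each $b \le t_i$ the block $u_b(\mathbf I_{i+1})$ has simple discrepancy $D(u_b(\mathbf I_{i+1}),b) \le \epsilon_{i+1}$, and by Lemma~\ref{length_of_blocks_in_BHS} each such block has length between $1$ and $O((\log i)^A)$, so after $i$ steps the total length $\Vert x_{i,b}\Vert$ is $\asymp$ between $i$ and $i(\log i)^A$, and in particular a prefix of $X$ of length $N$ in base $b$ is assembled from $\asymp N/(\log i)^A$ to $\asymp N$ blocks each of simple discrepancy $O(1/\log i)$. A standard argument (as in \cite{becher_heiber_slaman2013:polynomial_time_algorithm}, relating simple discrepancy of long concatenated blocks, together with the block lengths, to the discrepancy $D_N(b^n X)$ via the usual passage from simple normality to normality for integer bases and summation over the at most $O(\log N)$ relevant blocks whose $\epsilon$ differs) then gives $D_N(b^n X) = O(1/\log N)$, using that $i \asymp N$ up to log factors so $1/\log i \asymp 1/\log N$. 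Finally, for the complexity claim: step $i$ costs $O(i \cdot h(t_i,\epsilon_i))$ mathematical operations converted to elementary ones, but the dominant cost is actually that $h$ itself is $O(i^{\log^4 i})$ and $f$ is computed in real time, so outputting the first $N$ digits — which by Lemma~\ref{length_of_blocks_in_BHS} happens after $i \asymp N$ steps up to logs — costs $O(i^{2+\log^4 i})$; here the factor $i^2$ absorbs the $O(N^2 f(N))$-type overhead from \cite{becher_heiber_slaman2013:polynomial_time_algorithm} together with the per-step bookkeeping.

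The main obstacle, I expect, is the bookkeeping in the second step: one must verify carefully that \emph{all} the auxiliary quantities appearing in condition \eqref{cond_h} (namely $k(\tfrac1{v+1},\delta,v+1)$ and $h(v+1,\tfrac1{v+1})$ itself, and $m$) are genuinely computable in ``$i$ steps each'' for large $i$, so that the algorithm does not stall for a reason unrelated to the size of $f$. This is exactly where Lemma~\ref{growth_h_i} (computability of the $h$-bound in $i$ elementary operations) and the preceding Lemma on $k$ (computability in $O(v^2(\log v)^2)$ operations, which is $o(i)$ since $v = O(\log i)$) are invoked; assembling these into a clean statement that ``for all large $i$, every test in step $i+1$ succeeds, hence $t_{i+1}=t_i+1$ at every large power of $2$'' is the crux. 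The discrepancy transfer and the complexity accounting are then routine given the lemmas already proved.
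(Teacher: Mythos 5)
Your proposal is correct and follows essentially the same route as the paper's own (much terser) proof: real-time computability gives $m=i$ for large $i$, the growth hypothesis $f \gg i^{\log^4 i}$ together with Lemma~\ref{growth_h_i} and the corollary on condition~\eqref{cond_k} forces $t_i$ to increase at every large power of $2$, hence $\epsilon_i \asymp 1/\log i$, and the block-length bounds of Lemma~\ref{length_of_blocks_in_BHS} transfer this to $D_N(b^nX)=O(1/\log N)$. Your write-up supplies more of the bookkeeping (in particular the computability-in-$i$-steps checks) than the paper does, but no new idea or different decomposition is involved.
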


\begin{Prop}
If $f$ is a polynomial in $i$ of degree $d$, then the complexity of $X$ is $O(N^{d+2})$ but the discrepancy of $(b^n X)_{n\geq 0}$ is $D_N(b^nX) = O_d(\frac{1}{(\log N)^{1/5}})$.
\end{Prop}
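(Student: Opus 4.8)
The plan is to trace how the parameters $\epsilon_i$ and $t_i$ grow when $f$ is a polynomial of degree $d$, and then feed this into the discrepancy estimate that underlies the preceding proposition. First I would determine the complexity: by Lemma~\ref{length_of_blocks_in_BHS} at most $O((\log i)^A)$ digits are added per step and each step costs $h(t_i,\epsilon_i)$ mathematical operations times the per-operation cost $h_0 = \mathrm{poly}(\log i)$; since outputting $N$ digits of $X$ requires $N \ll \Vert x_{i,b}\Vert \ll i(\log i)^A$ steps, the time to reach $N$ digits is, as in \cite{becher_heiber_slaman2013:polynomial_time_algorithm}, $O(N^2 f(N)) = O(N^{d+2})$ when $f(i) = O(i^d)$. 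That settles the first assertion.

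The crux is the discrepancy bound. The bottleneck is that $t_{i+1}$ is only allowed to advance to $v+1$ at powers of $2$, and only if condition~\eqref{cond_h}, namely $h(v+1,\tfrac1{v+1}) < f(m)$ with $m \le i$, can be certified. The idea is: at the step $i = 2^s$ at which we attempt to raise $t$ from $v$ to $v+1$, the quantity $h(v+1,\tfrac1{v+1})$ is, by Lemma~\ref{growth_h_i} applied with parameter $v+1$ in place of $t_i$, of size roughly $2^{c\,(v+1)^4}$ (the $h_\ast$ term dominates since we are choosing $t$ freely rather than having $t = O(\log i)$ forced on us). For this to be $< f(m) = O(m^d) = O(i^d) = O(2^{sd})$ we need $c(v+1)^4 \lesssim sd$, i.e. $v+1 \lesssim (sd/c)^{1/4} = (d/c)^{1/4} (\log_2 i)^{1/4}$. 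Hence along the algorithm $t_i \asymp_d (\log i)^{1/4}$, and so $\epsilon_i = 1/t_i \gg_d (\log i)^{-1/4}$.

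Next I would combine the rate of $\epsilon_i \to 0$ with the rate at which digits accumulate to bound $D_N(b^nX)$. The standard chain is: once $i$ is large enough that $t_i \ge b$, every newly appended block $u_{j,b}$ for $j \ge i$ has simple discrepancy $\le \epsilon_{j}$ in base $b$ by construction, and since $\Vert x_{i,b}\Vert \gg i$ grows linearly while $\epsilon_i$ decays only like $(\log i)^{-1/4}$, a weighted-average (Cauchy–Stolz) argument shows the simple discrepancy of the whole prefix $x_{i,b}$ is $O_d((\log i)^{-1/4})$; translating $i \mapsto N \asymp \Vert x_{i,b}\Vert$ via $\log i \sim \log N$ gives simple-discrepancy $O_d((\log N)^{-1/4})$ in base $b$, and likewise simple normality to every base $b^k$ uniformly in $k \le k_i$, which by the usual passage from simple discrepancy of $k$-blocks to block discrepancy (losing one more factor, hence the exponent $1/5$ rather than $1/4$, absorbing the $k$-dependence and lower-order $\log\log$ losses) yields $D_N(b^nX) = O_d((\log N)^{-1/5})$.

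I expect the main obstacle to be pinning down the $(v+1)^4$ exponent in $h(v+1,\tfrac1{v+1})$ cleanly: Lemma~\ref{growth_h_i} is phrased with $t_i = O(\log i)$ forced by the algorithm, whereas here I need the bound on $h$ as an explicit function of its first argument alone (with $\epsilon = 1/(v+1)$), which requires re-reading the decomposition $h = h_\ast(h_1 g + h_2 + h_3 + h_4)h_0$ and noting that the $g$-term contributes only a power of $i$ while $h_\ast = 2^{\lceil\log_2 t\rceil k(1/t,\delta,t)} = 2^{O(t^3\log t)\cdot O(\log t)}$ is what forces the fourth-power relation $t \asymp (\log i)^{1/4}$. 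The second delicate point is the bookkeeping in the final paragraph: one must check that the degradation from simple discrepancy to full block discrepancy, together with the dependence of $k_i$ on $i$, costs only a bounded power of the logarithm so that the exponent stays $\ge 1/5$; all implied constants here legitimately depend on $d$, which is why the statement is $O_d(\cdot)$.
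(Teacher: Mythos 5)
Your complexity argument and your identification of condition~\eqref{cond_h} as the bottleneck are exactly right, and reading $h_\ast$ off as the dominant term of $h$ is the correct use of Lemma~\ref{growth_h_i}. But the way you arrive at the exponent $1/5$ diverges from the paper and contains a genuine gap. The paper's (very terse) proof gets $1/5$ entirely from the growth of $h$: in the proof of Theorem~\ref{D_N_BHS} the bound actually used is $h \ll \max(\mathrm{poly}(i), e^{t_i^5})$, i.e. $\lceil\log_2 t\rceil\, k(1/t,\delta,t) \ll (\log t)\cdot t^3\log t \ll t^5$, so that \eqref{cond_h} with $f(m)\asymp i^d$ forces $t_i \gg_d (\log i)^{1/5}$, hence $\epsilon_i \ll_d (\log i)^{-1/5}$, and the discrepancy is then read off as $O(\epsilon_i)$ using $\log N \asymp \log i$ (Lemma~\ref{length_of_blocks_in_BHS}), with no further loss charged for any conversion. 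You instead bound the same exponent by $t^4$ (also a valid crude bound for $t^3(\log t)^2$), obtain $\epsilon_i \asymp_d (\log i)^{-1/4}$, and then attribute the remaining degradation from $1/4$ to $1/5$ to ``the usual passage from simple discrepancy to block discrepancy.'' That last step is the gap: you invoke no quantitative lemma, and there is no standard conversion that costs precisely a factor of $(\log N)^{1/20}$. The naive conversion (approximate an arbitrary interval by $b$-adic intervals of depth $k$, sum the $b^k$ absolute digit-frequency errors of size $\epsilon_i$ in base $b^k\le t_i$, optimize over $k$) yields $D_N \ll \sqrt{\epsilon_i}$, i.e. $(\log N)^{-1/8}$ from your starting point --- worse than claimed --- and the aligned-versus-overlapping occurrence issue only adds further loss. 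So your route, taken literally, does not establish the stated bound; it lands on $1/5$ only because that is the target.

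A secondary point on directions: you write $\epsilon_i = 1/t_i \gg_d (\log i)^{-1/4}$, which is the inequality that is useless for an upper bound on $D_N$. From an \emph{upper} bound on $h$ one can only conclude that \eqref{cond_h} holds whenever $v+1 \lesssim (\log i)^{1/4}$, hence $t_i \gg (\log i)^{1/4}$ and $\epsilon_i \ll (\log i)^{-1/4}$; the matching upper bound on $t_i$ implicit in your $\asymp$ would require a \emph{lower} bound on $h$ that neither you nor the paper establishes, and it is not needed. If you replace your $t^4$ by the paper's cruder $t^5$, you get $\epsilon_i \ll_d (\log i)^{-1/5}$ directly and can conclude exactly as in the preceding proposition, which is all the paper's one-sentence proof does.
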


\begin{proof}
These corollaries follow by observing that the complexity of $f$ is such that $f$ is for large enough $i$ computed up to the actual value $f(i)$ (i.e. $m=i$) and that either the condition on $h$, \eqref{cond_h}, is satisfied, hence the discrepancy is optimal, or that condition \eqref{cond_h} is only satisfied for $e^{(\log i)^{1/5}}$ of the values that it is checked for.
\end{proof}

\noindent In a similar manner, using Lemma \ref{growth_h_i}, one can show quantitatively how growth and complexity of $f$ influence the discrepancy (and the complexity) of Becher, Heiber, Slaman's algorithm. This can be done for example by measuring complexity and growth of $f$ in the following (crude) way. We denote by $\log_{(k)}$ and $\exp_{(k)}$ the $k$ times iterated logarithm or exponential where $\exp_{(k)} = \log_{(-k)}$, and $\exp_{(0)} = \log_{(0)} = \mathit{id}$. Let $c$ be the integer such that in $i$ elementary operations $f$ can be computed up to a value $f(m)$ with $m \sim \log_{(c)} i$. Let $g$ be the integer such that $f$ grows as $f \sim \exp_{(g)} i$. We allow $g\in \mathbb{Z}$ but $c$ is non-negative.

\begin{Thm}\label{D_N_BHS}
Assume $f$ is such that the integers $c$ and $g$ above can be defined.  Then Becher, Heiber, Slaman's algorithm computes an absolutely normal number $X$ such that for any base $b\geq 2$,  
\begin{equation}
D_N(b^n X) = O\left(\frac{1}{(\log_{(1-g+c)} N)^{1/5}}\right)
\end{equation}
if $1-g+c > 0$, and
\begin{equation}
D_N(b^n X) = O\left(\frac{1}{\log N}\right)
\end{equation}
otherwise.
\end{Thm}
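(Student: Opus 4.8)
The plan is to reduce the statement to two independent estimates and then combine them. Put $\epsilon_i=1/t_i$, and for a given $N$ let $i=i(N)$ be the unique step with $\Vert x_{i,b}\Vert\le N<\Vert x_{i+1,b}\Vert$. The first (analytic) estimate is that $D_N(b^nX)=O(\epsilon_{i(N)})$, and the second (combinatorial) one is a lower bound for the growth of $t_i$ in terms of $f$, i.e. an upper bound for $\epsilon_{i(N)}$. Since Lemma~\ref{length_of_blocks_in_BHS} gives $i\ll\Vert x_{i,b}\Vert\ll i(\log i)^A$, we have $\log N\asymp\log i(N)$ and, more generally, $\log_{(k)}N\asymp\log_{(k)}i(N)$ for every $k\ge1$; so the two estimates together will yield the theorem.

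For the analytic estimate I would start from the standard inequality $D_N(b^nx)\ll L^{-1}+\max_{1\le l\le L}\widetilde D_l(a_1\cdots a_N)$, valid for any $x=\sum_{n\ge1}a_nb^{-n}$ and any $L\ge1$, where $\widetilde D_l$ is the discrepancy of (overlapping) blocks of length $l$; choosing $L\asymp 1/\epsilon_{i(N)}$ it suffices to show $\widetilde D_l(x_{i(N),b})=O(\epsilon_{i(N)})$ for $l\le L$. Here one uses three structural facts. First, for $j$ large ($t_j\ge b$) each block $u_{j,b}$ satisfies $\widetilde D_l(u_{j,b})=O(\epsilon_j)$ for all $l$ — for $l\le\log_b t_j$ directly from the bound $D(u_{j,b^l},b^l)\le\epsilon_j$ enforced by the algorithm, and for larger $l$ because the frequency of a block is at most that of its prefix. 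Second, the subdivision step of the algorithm forces $\Vert u_{j,b}\Vert=\Theta(t_j^3(\log t_j)^2)$, hence $\Vert u_{j,b}\Vert\gg t_j^3$ and $\Vert x_{i,b}\Vert\gg i\,t_i^3$ (the most recent, or the previous, constant-$\epsilon$ run already contributes this much). Third, $t$ is incremented only at powers of $2$, so the step index at least doubles between successive increments; this makes the total length of the blocks with $t_j\lesssim t_{i(N)}^{1/3}$ negligible relative to $\Vert x_{i(N),b}\Vert$, while the length lost to $l$-blocks straddling two of the $u_{j,b}$ is $O(t_{i(N)}\,i(N)/\Vert x_{i(N),b}\Vert)=O(\epsilon_{i(N)}^2)$. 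What then remains is $\widetilde D_l(x_{i(N),b})=O\bigl(\Vert x_{i(N),b}\Vert^{-1}\sum_{j\le i(N)}\epsilon_j\Vert u_{j,b}\Vert\bigr)+o(\epsilon_{i(N)})$, so it is enough to prove the \emph{uniform} concatenation bound $\sum_{j\le i}\epsilon_j\Vert u_{j,b}\Vert\le C\,\epsilon_i\Vert x_{i,b}\Vert$ with $C$ independent of $i$ (and $b$). I would obtain this by tracking $r_i=\bigl(\sum_{j\le i}\epsilon_j\Vert u_{j,b}\Vert\bigr)\big/\bigl(\epsilon_i\Vert x_{i,b}\Vert\bigr)$: on a maximal run of steps with $\epsilon$ constant, $r_i$ is a weighted average of its previous value and $1$, and since the step index at least doubles at the increment opening the run, the length accumulated over the run is at least a fixed fraction of the length preceding it, so the excess $r_i-1$ contracts there by a factor bounded away from $1$; at an increment $r_i$ grows only by the factor $\epsilon_{i-1}/\epsilon_i=t_i/(t_i-1)$, which tends to $1$. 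Hence the contraction beats the growth and $r_i$ stays bounded, giving $D_N(b^nX)=O(\epsilon_{i(N)})$.

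For the combinatorial estimate, $t_i=2+\#\{\text{successful increments at steps }2,4,\dots,2^{\lfloor\log_2 i\rfloor}\}$. For $i$ large, condition~\eqref{cond_k} is automatically satisfied and the quantities needed at step $i+1$ can be computed within $i$ elementary operations (by the results above, in particular Lemma~\ref{growth_h_i} and the bound on $k$), so for $i$ large the increment at step $2^s$ happens precisely when \eqref{cond_h} holds, i.e. $h(v+1,1/(v+1))<f(m)$ with $v=t_{2^s-1}$ and $m\sim\log_{(c)}i$, $i=2^s$. By the proof of Lemma~\ref{growth_h_i} the term governing $h(v+1,1/(v+1))$ is $h_\ast=2^{\lceil\log_2 t_i\rceil\,k(1/(v+1),\delta,v+1)}$ with $k(1/(v+1),\delta,v+1)=O((v+1)^3\log(v+1))$, so crudely $h(v+1,1/(v+1))\le\exp\bigl(C(v+1)^5\bigr)$; meanwhile $f(m)\sim\exp_{(g)}\bigl(\log_{(c)}i\bigr)$. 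If $1-g+c\le0$ then $\log f(m)\sim\exp_{(g-1-c)}(i)\ge i$, which for large $i$ exceeds $(\log i)^5\ge(v+1)^5$, so every increment succeeds, $t_i=\log_2 i+O(1)$ and $\epsilon_i=O(1/\log i)$. If $1-g+c>0$ then $\log f(m)\sim\log_{(1-g+c)}(i)$, so \eqref{cond_h} holds iff $C(v+1)^5\lesssim\log_{(1-g+c)}(i)$, i.e. iff $v\lesssim\bigl(\log_{(1-g+c)}i\bigr)^{1/5}$; since this threshold increases with $i$ while its consecutive increments tend to $0$ and $t$ moves up by $1$ at each power of $2$ at which it lies below it, one gets $t_i\gg\bigl(\log_{(1-g+c)}i\bigr)^{1/5}$, whence $\epsilon_i=O\bigl((\log_{(1-g+c)}i)^{-1/5}\bigr)$.

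Combining the two estimates, and using $\log_{(k)}N\asymp\log_{(k)}i(N)$ for $k\ge1$ (and $\log N\asymp\log i(N)$), gives $D_N(b^nX)=O\bigl((\log_{(1-g+c)}N)^{-1/5}\bigr)$ when $1-g+c>0$ and $D_N(b^nX)=O(1/\log N)$ otherwise. The hard part is the analytic estimate: the uniform concatenation bound together with the control of the oldest blocks and of the block boundaries all require the fine structure of the algorithm — the sharp growth $\Vert u_{j,b}\Vert=\Theta(t_j^3(\log t_j)^2)$ and the fact that $t$ changes only at powers of $2$ — rather than just the crude bounds of Lemma~\ref{length_of_blocks_in_BHS}. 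A secondary point is to pin down which term of $h$ controls \eqref{cond_h} and to check that the crude fifth-power bound there is exactly what produces the exponent $1/5$.
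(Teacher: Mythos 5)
Your combinatorial half is, almost verbatim, the paper's proof --- indeed it is the paper's \emph{entire} proof. The paper disposes of Theorem~\ref{D_N_BHS} in four lines: $h\ll\max(\mathrm{poly}(i),e^{t_i^5})$ and $t_i\ll\log i$; $t_i$ increases only at powers of two and only when \eqref{cond_h} holds; this holds for all large $i$ if $g-c\ge1$, and otherwise only for $i$ with $(\log i)^5\ll\exp_{(g-c-1)}(i)$, forcing $t_i^5\asymp\log_{(1-g+c)}i$; and then ``with $1/t_i=\epsilon_i$ this gives'' the stated discrepancy bound. Your analysis of when \eqref{cond_h} succeeds, including the crude $e^{C(v+1)^5}$ bound coming from $h_\ast=2^{\lceil\log_2 t_i\rceil k_{i+1}}$ with $k=O(v^3\log v)$, reproduces this faithfully. (One caveat you inherit from the paper: the remark inside the proof of Lemma~\ref{growth_h_i} states that when $t_i$ grows slower than $\log\log i$ the dominant term of $h$ is $g=\mathrm{poly}(i)$ rather than $h_\ast$; the regime $1-g+c>0$ is exactly that situation, and neither you nor the paper reconciles this with taking $h_\ast$ as the binding term in \eqref{cond_h}.)

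The genuine gap is in your analytic half --- which, to be fair, the paper does not prove at all, but which you correctly identify as the hard part. Two problems. First, the inequality $D_N(b^nx)\ll L^{-1}+\max_{1\le l\le L}\widetilde D_l$ is not the standard one: decomposing $[0,\gamma)$ into base-$b$ cylinders gives
\begin{equation*}
D_N(b^nx)\;\le\; b^{-L}+(b-1)\sum_{l=1}^{L}\max_{\Vert d\Vert=l}\left\vert \frac{N_d(a,N)}{N}-b^{-l}\right\vert+O\!\left(\frac{L}{N}\right),
\end{equation*}
with a \emph{sum} over $l$, not a maximum. Second, the algorithm only enforces $D(u_{b^l}(\mathbf{J}),b^l)\le\epsilon_i$ for $b^l\le t_i$, i.e.\ an absolute error of size $\epsilon_i$ on each length-$l$ block frequency, uniform in $l$ with no decay; feeding this into the correct inequality with $L\asymp\log_b t_i$ yields $O(\epsilon_i\log(1/\epsilon_i))$, not $O(\epsilon_i)$. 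That lost factor is exactly the ``slightly worse than $O(1/\log N)$'' the introduction concedes. So your claimed estimate $D_N(b^nX)=O(\epsilon_{i(N)})$ does not follow from the stated ingredients: either you accept the extra $\log(1/\epsilon_{i(N)})$ factor (which also degrades the first case of the theorem by a factor $\log\log_{(1-g+c)}N$), or you need strictly more than the algorithm's uniform simple-discrepancy control --- e.g.\ control of the length-$l$ block frequencies that is summable in $l$ --- which neither the algorithm nor your sketch provides. The remaining machinery in your analytic part (the uniform concatenation bound via the ratio $r_i$, the corrections for old blocks and block boundaries) is plausible but sits downstream of this unproved step.
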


\begin{proof}
We have $h \ll \max( \text{poly}(i), e^{t_i^5})$ and $t_i \ll \log i$ by the way the algorithm is defined. $t_i$ only increases if $i$ is a power of two and if $h\leq f(m)$. The latter condition is satisfied for all $i$ large enough if $g-c \geq 1$, and for all $i$ (that are powers of two) that satisfy $i \ll \exp((\exp_{g-c-1}(i))^{1/5})$. With $1/t_i = \epsilon_i$ this gives in this case an upper bound for the discrepancy of order $1/(\log_{(1-g+c)} N)^{1/5}$.
\end{proof}

\section{Absolutely Pisot Normal Numbers}\label{sec:abs_pisot}

\noindent In this section, we give an algorithmic construction of a real number that is normal to each base from a given sequence of Pisot numbers.
For more information about $\beta$-expansions and $\beta$-normal numbers see for example the book \cite{bugeaud2012distribution}. 
We have partly followed the notation in \cite{bertrand-mathis_volkmann1989:epsilon_k_normal}.

\subsection{$\beta$-expansions of real numbers}
\label{Sec:defbetaexpansions}

Let $\beta >1$ be a real number. Then each real number $x\in [0,1)$ has a representation of the form
\begin{equation}\label{beta-exp}
x = \sum_{i=1}^{\infty} \epsilon_i \beta^{-i},
\end{equation}
with integer digits $0 \leq \epsilon_i < \beta$. 
One way to obtain such a representation is the following.
Let $T_\beta$ be the \emph{$\beta$-transformation} $T_\beta : [0,1) \rightarrow [0,1)$, \mbox{$x \mapsto \beta x \pmod 1$.}
Then $\epsilon_i = \lfloor \beta T_\beta^{i-1}(x) \rfloor$ for $i\geq 1$.

R\'{e}nyi \cite{renyi1957:reps_for_real_numbers} showed that there is a unique probability measure $\mu_\beta$ on $[0,1)$ that is equivalent to the Lebesgue measure and such that $\mu_\beta$ is invariant and ergodic with respect to $T_\beta$ and has maximum entropy. 
The measure $\mu_\beta$ satisfies $(1-\frac{1}{\beta})\lambda \leq \mu_\beta \leq \frac{\beta}{\beta-1}\lambda$.

Let $c(d)$ be the \emph{cylinder set} corresponding to the block $d$, i.e. the set of all real numbers in the unit interval whose first $\Vert d \Vert$ digits coincide with $d$. A \emph{$\beta$-adic interval} is a cylinder set $c(d)$ for some $d$.



Let $W^\infty$ be the set of right-infinite words $\omega = \omega_1 \omega_2 \ldots$ with digits $0 \leq \omega_i < \beta$ that appear as the $\beta$-expansions of real numbers in the unit interval. Let $\mathcal{L}_n$ be the set of all finite subwords of length $n$ of words $\omega \in W^\infty$ and let $W = \bigcup_{n\geq 1} \mathcal{L}_n$. We call the words in $W$ \emph{admissible}.

We have $\beta^n \leq \vert \mathcal{L}_n \vert \leq \frac{\beta}{\beta-1} \beta^n$ for the number of elements of $\mathcal{L}_n$.

For an infinite word $\omega = \omega_1 \omega_2 \ldots \in W^\infty$ and a block $d=d_1d_2 \ldots d_k$ of digits $0\leq d_i < \beta$ we denote by $N_d(\omega,n)$ the number of (possibly overlapping) occurrences of $d$ within the first $n$ letters of $\omega$. If the word $\omega$ is finite, we write $N_d(\omega)$ for $N_d(\omega, \Vert \omega \Vert)$.

 An infinite word $\omega \in W^\infty$ is called \emph{$\mu_\beta$-normal} if 
for all $d\in \mathcal{L}_k$,
\begin{equation*}
\lim_{n\rightarrow \infty} \frac{1}{n} N_d(\omega,n) = \mu_\beta(c(d)).
\end{equation*}
A real number $x \in [0,1)$ is called \emph{normal to base $\beta$} or \emph{$\beta$-normal}, if the infinite word $\epsilon_1 \epsilon_2 \ldots$ defined by its $\beta$-expansion~\eqref{beta-exp} is $\mu_\beta$-normal.

For fixed $\epsilon>0$ and positive integers $k$, $n$, a word $\omega \in \mathcal{L}_n$ is called \emph{$(\epsilon,k)$-normal} if for all $d \in \mathcal{L}_k$
\begin{equation*}
\mu_\beta(c(d)) (1-\epsilon) \Vert \omega \Vert< N_d(\omega)  < \mu_\beta(c(d)) (1+\epsilon) \Vert \omega \Vert.
\end{equation*}
The set of all $(\epsilon,k)$-normal numbers in $\mathcal{L}_n$ will be denoted by $E_n(\epsilon,k)$ and its complement by $E^c_n(\epsilon,k)$.

A \emph{Pisot number} $\beta$ is a real algebraic integer $\beta > 1$ such that all its conjugates have absolute value less than $1$, and as usual we include all positive integers $b\geq 2$ in this definition. All Pisot numbers smaller than the golden mean were found by Dufresnoy and Pisot \cite{dufresnoy_pisot1955:etude_de_certaines}. In particular, they showed that the smallest one is the positive root of $x^3-x-1$ (called the plastic number) which is approximately $1.32471>\sqrt[3]{2}$.


\subsection{Preliminaries}

\begin{Lem}[{\cite[Lemma 3]{bertrand-mathis_volkmann1989:epsilon_k_normal}}]\label{bv:lem3}
Let $\beta>1$ be Pisot. For every $\varepsilon>0$ and positive integer $k$ there exist $\eta=\eta(\varepsilon,k)$, $0 < \eta < 1$, 
$C=C(\varepsilon,k)>0$ and $n_0 = n_0(\epsilon, k)$ such that for the number of non-$(\epsilon,k)$-normal words of length $n$
\begin{equation*}
\vert E^c_n(\epsilon,k) \vert <C\left| \mathcal{L}_n\right|^{1-\eta}
\end{equation*}
holds for all $n\geq n_0$.
\end{Lem}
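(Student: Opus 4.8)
The plan is to reduce to a single pattern $d\in\mathcal{L}_k$ and then run a Chernoff-type large-deviation estimate on a finite-state presentation of the $\beta$-shift. A word $\omega\in\mathcal{L}_n$ fails to be $(\epsilon,k)$-normal exactly when there is some $d\in\mathcal{L}_k$ with $N_d(\omega)\geq(1+\epsilon)\mu_\beta(c(d))n$ or $N_d(\omega)\leq(1-\epsilon)\mu_\beta(c(d))n$. Since $|\mathcal{L}_k|\leq\tfrac{\beta}{\beta-1}\beta^k$ is finite and depends only on $k$, it is enough to show that for each fixed $d$ the number of such ``$d$-atypical'' words of length $n$ is $\leq C_d|\mathcal{L}_n|^{1-\eta_d}$ for some $C_d,\eta_d>0$ and all large $n$; one then takes $\eta=\min_d\eta_d$, $C=\sum_d C_d$, and $n_0$ accordingly.

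For the count of $d$-atypical words I would set up a finite-state model. Since $\beta$ is Pisot it is a Parry number, so the $\beta$-shift is sofic: there is a finite, strongly connected (and, after replacing the shift by a fixed power if necessary, aperiodic) labelled directed graph $G$ whose length-$n$ label sequences are exactly the words of $\mathcal{L}_n$, whose adjacency matrix $A$ has Perron eigenvalue $\beta$, and for which $\mu_\beta$ is the associated Parry (maximal-entropy Markov) measure; in particular the number of length-$n$ paths is $\Theta(\beta^n)=\Theta(|\mathcal{L}_n|)$ (see \cite{bugeaud2012distribution}). Passing to the $k$-block presentation one may assume that ``an occurrence of $d$ terminates at this edge'' is a property of a single edge of $G$; this is where the overlapping of occurrences of $d$ is absorbed. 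For $s\in\mathbb{R}$ let $A(s)$ be $A$ with the weight of every ``occurrence-of-$d$'' edge multiplied by $e^{s}$. Then $\sum_{\omega\in\mathcal{L}_n}e^{sN_d(\omega)}=\Theta\bigl(\rho(s)^n\bigr)$, where $\rho(s)$ is the Perron eigenvalue of the non-negative primitive matrix $A(s)$; by Perron--Frobenius perturbation theory $s\mapsto\log\rho(s)$ is real-analytic and convex, with $\rho(0)=\beta$ and, by the standard thermodynamic identity, $\tfrac{d}{ds}\log\rho(s)\big|_{s=0}=\mu_\beta(c(d))$.

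Now tilt and apply Markov's inequality. Set $\psi_+(s)=\log\rho(s)-s(1+\epsilon)\mu_\beta(c(d))$; then $\psi_+(0)=\log\beta$ and $\psi_+'(0)=-\epsilon\,\mu_\beta(c(d))<0$ (here $\mu_\beta(c(d))>0$ because $\mu_\beta$ is equivalent to Lebesgue measure and $c(d)$ has positive length), so some small $s_+>0$ gives $e^{\psi_+(s_+)}<\beta$. Markov's inequality then yields
\begin{align*}
\bigl|\{\omega\in\mathcal{L}_n:N_d(\omega)\geq(1+\epsilon)\mu_\beta(c(d))n\}\bigr| &\leq e^{-s_+(1+\epsilon)\mu_\beta(c(d))n}\sum_{\omega\in\mathcal{L}_n}e^{s_+N_d(\omega)}\\
&\ll e^{n\psi_+(s_+)}\ll|\mathcal{L}_n|^{1-\eta_+}
\end{align*}
for a suitable $\eta_+>0$, since $|\mathcal{L}_n|\geq\beta^n$. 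The lower-tail count is bounded in exactly the same way using $\psi_-(s)=\log\rho(s)-s(1-\epsilon)\mu_\beta(c(d))$, whose derivative at $0$ is $+\epsilon\,\mu_\beta(c(d))>0$, by choosing a small $s_-<0$. Combined with the reduction step, this proves the lemma.

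The crux of the argument is the finite-state model: one needs soficity of the Pisot $\beta$-shift \emph{together with} the identification of the maximal-entropy measure $\mu_\beta$ with the Parry measure of a primitive graph presentation, so that the thermodynamic derivative $\tfrac{d}{ds}\log\rho(s)|_{s=0}$ is genuinely $\mu_\beta(c(d))$ and not merely a Lebesgue-type frequency; the bookkeeping for overlapping occurrences (the $k$-block recoding) is the other point that needs care. Once the model is in place, the tilting-plus-Markov step is the classical Chernoff bound and is routine.
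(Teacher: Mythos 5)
Your proof is correct, but it takes a genuinely different route from the paper. The paper does not prove Lemma~\ref{bv:lem3} in place (it cites Bertrand-Mathis and Volkmann); its own effective proof is Proposition~\ref{non_normal_expl_constants} and the corollary following it, where the occurrences of $d$ are split into $M+1$ interleaved sums of \emph{independent} indicators ($M$ being the explicit bound from Proposition~\ref{Proposition_on_M} on the orbit of $1$ under $T_\beta$), and Siegel's Hoeffding-type inequality for such partially dependent sums (Lemma~\ref{Hoeffdingvariant}) yields the tail bound with every constant written out, in particular the exponent $\eta$ of~\eqref{exponent_eps_k_normal}. You instead recode the sofic $\beta$-shift into a primitive labelled graph, tilt the adjacency matrix, and run the classical Chernoff/Perron--Frobenius large-deviation argument; the derivative-of-pressure identity $\frac{d}{ds}\log\rho(s)\big|_{s=0}=\mu_\beta(c(d))$ that you single out is indeed the crux and is correctly invoked, as are the union bound over $d\in\mathcal{L}_k$ and the passage from $e^{n\psi_\pm(s_\pm)}$ to $|\mathcal{L}_n|^{1-\eta}$ via $|\mathcal{L}_n|\geq\beta^n$. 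What your approach buys is a conceptually clean, standard proof of the qualitative statement (matching the ineffective original); what it loses is effectivity: your $s_\pm$ and $\eta_\pm$ come from an implicit analytic-perturbation statement about the Perron eigenvalue of $A(s)$, whereas the paper needs $\eta$, $C$ and $n_0$ as explicitly computable functions of $\beta$, $\epsilon$, $k$, since they enter the algorithm through conditions~\eqref{n_i_admissible_cond}, \eqref{n_i_large_enough_cond_for_delta} and the bound~\eqref{growth_of_n_i}. So your argument proves the lemma as stated but could not replace Section~\ref{large_deviation_section} without making the spectral gap and the second derivative of $\log\rho(s)$ explicit, which is substantially harder than the elementary blocking argument the paper uses.
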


\noindent In Section~\ref{large_deviation_section} we give explicit estimates for $n_0$, $C$ and $\eta$.\\


\noindent The following Lemma contains the underlying idea of our construction.


\begin{Lem}[{\cite[Lemma 4]{bertrand-mathis_volkmann1989:epsilon_k_normal}}]\label{bv:lem4}
Let $a_1,a_2,\ldots$ be a sequence of finite words $a_n\in W$ such that $a=a_1 a_2 \ldots \in W^\infty$ and
$\Vert a_n \Vert\to\infty$ as $n\rightarrow \infty$. Suppose that for any $\epsilon>0$ and any positive integer $k$ there exists an integer $n_0(\epsilon,k)$ such that all $a_n$ with $n\geq n_0(\epsilon,k)$ are
$(\epsilon,k)$-normal.
If
\begin{equation}\label{growth_conditions_for_normality}
n=o\left(\Vert a_1 a_2 \ldots a_n \Vert  \right)
\quad \text{and} \quad
\Vert a_{n+1} \Vert = o(\Vert a_1 a_2 \ldots a_n \Vert),
\end{equation}
then the infinite word $a=a_1 a_2 \ldots$ is $\mu_\beta$-normal.
\end{Lem}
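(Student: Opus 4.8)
The plan is to show directly that for every block $d \in \mathcal{L}_k$ one has $\frac{1}{n} N_d(a, n) \to \mu_\beta(c(d))$, by comparing the count of occurrences of $d$ in a prefix of $a$ with the sum of the counts inside the individual words $a_j$. First I would fix $\epsilon > 0$ and $k \geq 1$ and choose $n_0 = n_0(\epsilon, k)$ as in the hypothesis, so that every $a_j$ with $j \geq n_0$ is $(\epsilon, k)$-normal. Write $A_m = a_1 a_2 \cdots a_m$ and $L_m = \Vert A_m \Vert$. For a parameter $N$, let $m = m(N)$ be the largest index with $L_m \leq N$, so that $A_m$ is a prefix of the prefix of $a$ of length $N$, and the remaining $N - L_m < \Vert a_{m+1} \Vert$ letters lie inside $a_{m+1}$.

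The key decomposition is that $N_d(a, N)$ equals $\sum_{j=1}^{m} N_d(a_j)$ plus two correction terms: occurrences of $d$ that straddle a boundary between consecutive words $a_j, a_{j+1}$ (there are at most $k-1$ such occurrences per boundary, hence at most $(k-1)m$ in total), and occurrences lying partly or wholly in the tail of length $N - L_m$ inside $a_{m+1}$ (at most $\Vert a_{m+1} \Vert$ of them). So I would write
\begin{equation*}
\Bigl| N_d(a, N) - \sum_{j=1}^{m} N_d(a_j) \Bigr| \leq (k-1) m + \Vert a_{m+1} \Vert.
\end{equation*}
Next, split $\sum_{j=1}^m N_d(a_j)$ into the contribution from $j < n_0$, which is a fixed bounded quantity $O_{n_0}(1)$, and the contribution from $n_0 \leq j \leq m$. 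For each such $j$, the $(\epsilon,k)$-normality of $a_j$ gives $\mu_\beta(c(d))(1-\epsilon) \Vert a_j \Vert < N_d(a_j) < \mu_\beta(c(d))(1+\epsilon) \Vert a_j \Vert$, and summing over $j$ from $n_0$ to $m$ yields
\begin{equation*}
\mu_\beta(c(d))(1-\epsilon)\bigl(L_m - L_{n_0-1}\bigr) < \sum_{j=n_0}^{m} N_d(a_j) < \mu_\beta(c(d))(1+\epsilon)\bigl(L_m - L_{n_0-1}\bigr).
\end{equation*}
Combining the two displays and dividing by $N$, I would get $\frac{1}{N} N_d(a,N)$ trapped between $\mu_\beta(c(d))(1\mp\epsilon)\frac{L_m}{N}$ up to an additive error of order $\frac{(k-1)m + \Vert a_{m+1}\Vert + O_{n_0}(1)}{N}$.

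Now I invoke the growth conditions \eqref{growth_conditions_for_normality}. Since $L_m \leq N < L_{m+1} = L_m + \Vert a_{m+1} \Vert$ and $\Vert a_{m+1} \Vert = o(L_m)$, we have $L_m / N \to 1$ as $N \to \infty$. Likewise $\Vert a_{m+1} \Vert / N \to 0$, and $m = o(L_m) = o(N)$ from the first condition $n = o(\Vert a_1 \cdots a_n \Vert)$ (applied with $n = m$), so the whole additive error term tends to $0$. Hence $\limsup_{N} \bigl| \frac{1}{N} N_d(a,N) - \mu_\beta(c(d)) \bigr| \leq \epsilon \, \mu_\beta(c(d)) \leq \epsilon$. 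Since $\epsilon > 0$ was arbitrary, the limit is exactly $\mu_\beta(c(d))$; as this holds for all $d \in \mathcal{L}_k$ and all $k$, the word $a$ is $\mu_\beta$-normal. The only mildly delicate point — and the one I would be most careful about — is the bookkeeping of the boundary-straddling occurrences and the tail in $a_{m+1}$, making sure the bound $(k-1)m + \Vert a_{m+1}\Vert$ is genuinely valid for overlapping occurrences of $d$, and that all the "$o$" statements are correctly transferred from the index $n$ in \eqref{growth_conditions_for_normality} to the running index $m = m(N)$, which requires only that $m(N) \to \infty$ as $N \to \infty$ (true since each $\Vert a_j \Vert$ is finite).
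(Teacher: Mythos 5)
Your proposal is correct and follows essentially the same route as the paper's proof: decompose $N_d(a,N)$ into the per-word counts $N_d(a_j)$, bound the boundary-straddling occurrences by $(k-1)$ per junction and the initial segment $j < n_0$ by a constant, apply $(\epsilon,k)$-normality to the remaining words, and divide by $N$ using the growth conditions \eqref{growth_conditions_for_normality}. Your handling of the tail inside $a_{m+1}$ is slightly more explicit than the paper's (which simply includes the full word $a_n$ in the upper bound), but the argument is the same.
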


\begin{proof}
Let $\epsilon>0$ and $d \in \mathcal{L}_k$.
It suffices to show that, as $N\rightarrow\infty$,
\begin{equation*}
\mu_\beta(c(d))(1-\epsilon)N < N_d(a,N) < \mu_\beta(c(d))(1+\epsilon)N.
\end{equation*}
We have $N_d(a,N) = N_d(a_1a_2 \ldots a_n, N)$, where $n$ is such that $\Vert a_1 a_2 \ldots a_{n-1} \Vert < N \leq \Vert a_1 \ldots a_n \Vert$. Then, for $N$ large enough,
\begin{align*}
N_d(a_1 \ldots a_n, N) & \leq N_d(a_1 \ldots a_{n_0(\epsilon,k)}) + n(k-1) + N_d(a_{n_0+1}) +  \ldots + N_d(a_n) \\
&\leq \text{const$(\epsilon,k)$} + n(k-1) + \sum_{i=n_0+1}^n \mu_\beta(c(d))(1+\epsilon)\Vert a_i \Vert.
\end{align*}
Dividing by $N$ gives the desired result, assuming conditions~\eqref{growth_conditions_for_normality}. The calculation for the lower bound for $N_d(a,N)$ is similar.
\end{proof}


\begin{Lem}\label{liwu:prop2.6}
Let $\beta>1$ be Pisot. There exists $M\geq 0$ such that for all $n\geq1$ and all $d\in L_n$  the Lebesgue measure of the cylinder set $c(d)$ satisfies
\begin{equation}\label{lower_bound_measure_beta_adic_interval}
\beta^{-(M+1)} \beta^{-n}\leq \lambda(c(d)) \leq \beta^{-n}.
\end{equation}
\end{Lem}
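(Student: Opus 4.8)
The plan is to make the cylinder set $c(d)$ explicit by means of the $\beta$-transformation $T_\beta$, thereby reducing the lemma to a uniform lower bound for the lengths of the ``exit intervals'' $T_\beta^{\,n}(c(d))$. Since the greedy digit map $x\mapsto(\epsilon_i(x))_{i\ge1}$ is non-decreasing, each $c(d)$ with $d=d_1\cdots d_n\in\mathcal{L}_n$ is an interval whose left endpoint is $\ell(d)=\sum_{i=1}^{n}d_i\beta^{-i}$; indeed $\ell(d)\in c(d)$, its $\beta$-expansion being the admissible word $d_1\cdots d_n0^{\infty}$. On $c(d)$ one has $\epsilon_i(x)=d_i$ for $i\le n$, so iterating $T_\beta x=\beta x-\epsilon_1(x)$ gives $T_\beta^{\,n}x=\beta^{n}x-\sum_{i=1}^{n}d_i\beta^{\,n-i}=\beta^{n}\bigl(x-\ell(d)\bigr)$ for all $x\in c(d)$. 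Hence $T_\beta^{\,n}$ restricted to $c(d)$ is an increasing affine map of slope $\beta^{n}$ sending $\ell(d)$ to $0$, its image is an interval $[0,r(d))$ with $0<r(d)\le1$, and $\lambda(c(d))=\beta^{-n}r(d)$. In particular $\lambda(c(d))\le\beta^{-n}$, which is the upper bound; it remains to bound $r(d)=\lambda\bigl(T_\beta^{\,n}(c(d))\bigr)$ away from $0$ uniformly in $d$, and this is where the Pisot hypothesis enters.

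For the lower bound I would use Parry's admissibility criterion: a sequence lies in $W^{\infty}$ if and only if each of its suffixes is lexicographically smaller than the quasi-greedy expansion $d^{*}_\beta(1)=t_1t_2t_3\cdots$ of $1$. Applying this to $d_1\cdots d_ny_1y_2\cdots$, the constraints imposed on the tail $y_1y_2\cdots$ by the prefix $d_1\cdots d_n$ come only from those suffixes $d_{n-j+1}\cdots d_n$ of $d$ that coincide with a prefix $t_1\cdots t_j$ of $d^{*}_\beta(1)$, each such $j$ forcing $y_1y_2\cdots<t_{j+1}t_{j+2}\cdots$ lexicographically. Consequently $r(d)=1$ if no suffix of $d$ is a prefix of $d^{*}_\beta(1)$, and otherwise $r(d)=\min_{j\in S(d)}\gamma_j$, where $\gamma_j:=\sum_{i\ge1}t_{j+i}\beta^{-i}$ and $S(d)\ne\varnothing$ is the set of $j\ge1$ with $t_1\cdots t_j$ a suffix of $d$. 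In all cases $r(d)\in\{1\}\cup\{\gamma_j:j\ge1\}$, and each $\gamma_j>0$ because $d^{*}_\beta(1)$ is infinite and does not end in $0^{\infty}$.

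Now I invoke the Pisot property: for $\beta$ Pisot, $d_\beta(1)$, hence $d^{*}_\beta(1)$, is eventually periodic (a theorem of Bertrand-Mathis and Schmidt; see \cite{bugeaud2012distribution}), so the tails $t_{j+1}t_{j+2}\cdots$, and therefore the numbers $\gamma_j$, take only finitely many values. Hence $c_0:=\min\bigl(1,\inf_{j\ge1}\gamma_j\bigr)>0$ depends only on $\beta$, and choosing $M:=\lceil-\log_\beta c_0\rceil\ge0$ gives $\beta^{-(M+1)}\le c_0\le r(d)$ for every admissible $d$, whence $\lambda(c(d))=\beta^{-n}r(d)\ge\beta^{-(M+1)}\beta^{-n}$, as required.

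I expect the main obstacle to be this last step, namely that $r(d)$ assumes only finitely many values: it rests on the standard but non-trivial fact that a Pisot base is a Parry number, together with the bookkeeping of which suffixes of $d$ actually produce active admissibility constraints. It is precisely here that the Pisot assumption is indispensable, since for a general real $\beta>1$ the quantities $\gamma_j$ may converge to $0$ and no such $M$ exists.
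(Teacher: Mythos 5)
Your proof is correct and self-contained, and it follows essentially the same route as the result the paper merely cites (Proposition 2.6 of Li and Wu): the paper's remark that $M$ may be taken to be the largest run of zeros in $d^*_\beta(1)$ matches your argument, since each tail value satisfies $\gamma_j \ge \beta^{-(z_j+1)}$ with $z_j$ the number of leading zeros of $t_{j+1}t_{j+2}\cdots$, and eventual periodicity (Pisot $\Rightarrow$ Parry) bounds both the $z_j$ and the number of distinct $\gamma_j$. I see no gaps beyond harmless endpoint conventions that do not affect the Lebesgue measure.
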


\begin{proof}
This is Proposition 2.6 of \cite{li_wu2008:beta_expansion_and}.
\end{proof}

\noindent Following the argument in \cite{li_wu2008:beta_expansion_and}, one can take $M$ to be the size of the largest block of consecutive zeros in the modified $\beta$-expansion of $1$ (see Section~\ref{subsec:number_of_zeros}). We give an explicit upper bound on $M$ in Proposition~\ref{Proposition_on_M}.\\

\noindent We wish to control the lengths when changing the base. The following is an analogue to Lemma~3.3 in \cite{becher_heiber_slaman2013:polynomial_time_algorithm}; see also Lemma~\ref{lowerboundintervals}.

\begin{Lem}\label{inscribed_interval_beta}
Let $\beta$ be Pisot and $M$ as above. For
any interval $I$ there is a $\beta$-adic subinterval $I_\beta$ of $I$ such that
$\lambda(I_\beta) \geq \lambda(I) / 2\beta^{M+4}.$
\end{Lem}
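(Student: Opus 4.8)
The plan is to mimic the classical argument for integer bases (Lemma~\ref{lowerboundintervals}) but account for the two sources of distortion peculiar to $\beta$-expansions: the level-$n$ cylinders $c(d)$ need not all have the same length, and two cylinders of the same level that are $\beta$-adically adjacent need not be geometrically adjacent (there can be a "gap" where no admissible word begins). First I would fix the unique integer $n$ with $\beta^{-(M+1)}\beta^{-n} \le \lambda(I) / (2\beta^{M+3})$ but $\beta^{-(M+1)}\beta^{-(n-1)} > \lambda(I)/(2\beta^{M+3})$ — i.e. choose the cylinder level so that level-$n$ cylinders are, by Lemma~\ref{liwu:prop2.6}, at most $\beta^{-n} \le \lambda(I)/2$ in length, hence small compared to $I$, while not being so small that we lose the desired lower bound. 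With this choice every level-$n$ cylinder $c(d)$ satisfies $\lambda(c(d)) \le \beta^{-n}$ and, by the lower bound in \eqref{lower_bound_measure_beta_adic_interval}, $\lambda(c(d)) \ge \beta^{-(M+1)}\beta^{-n}$.

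Next I would count how many whole level-$n$ $\beta$-adic intervals fit inside $I$. Since the collection of level-$n$ cylinders consists of intervals of length at most $\beta^{-n}$, possibly separated by gaps, but the total length covered (cylinders plus any gaps not covered by admissible words) is controlled — in fact the measure of the set of points in $[0,1)$ whose first $n$ digits fail to be admissible is a single "tail" region and in the Pisot case the gaps between consecutive admissible level-$n$ cylinders are themselves of size $O(\beta^{-n})$ uniformly — the interval $I$ of length $\ge \beta^{-n}\cdot 2\beta^{M+3}$ must contain at least one full level-$n$ cylinder. Concretely: walking left to right across $I$, the first cylinder that starts inside $I$ starts within distance at most ($\text{one cylinder} + \text{one gap}$) $\le C\beta^{-n}$ of the left endpoint of $I$, and ends within a further $\beta^{-n}$; since $\lambda(I)$ exceeds the sum of these, that cylinder lies wholly in $I$. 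Picking the leftmost such $I_\beta = c(d)$ gives $\lambda(I_\beta) \ge \beta^{-(M+1)}\beta^{-n}$, and feeding back the choice of $n$ yields $\lambda(I_\beta) \ge \lambda(I)/(2\beta^{M+4})$ after collecting the powers of $\beta$ (one factor $\beta$ of slack absorbs the rounding in the choice of $n$).

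The step I expect to be the main obstacle is the gap estimate: showing that the distance between two $\beta$-adically consecutive level-$n$ admissible cylinders is $O(\beta^{-n})$ with an explicit constant depending only on $M$. This is where the Pisot (or at least $\beta$-number) hypothesis and the quantity $M$ — the longest run of zeros in the modified $\beta$-expansion of $1$, cf. Section~\ref{subsec:number_of_zeros} — really enter: a gap of length $> \beta^{M+1}\cdot\beta^{-n}$ would force an inadmissible level-$n$ word sandwiched between two admissible ones, contradicting the structure of the admissibility condition (a block is admissible iff all its suffixes are lexicographically below the expansion of $1$), and one can read off from that characterization that such a gap cannot persist for more than $M+1$ consecutive levels. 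Once that uniform-gap bound is in hand, the packing count and the arithmetic of exponents are routine, and the constant $2\beta^{M+4}$ follows by keeping careful track of: one $\beta$ from Lemma~\ref{liwu:prop2.6}'s $\beta^{-(M+1)}$ lower bound, $M+2$ more from the gap/packing estimate, and one from the rounding in the choice of $n$.
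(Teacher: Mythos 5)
Your overall plan is the same as the paper's: pick a cylinder level $n$ calibrated to $\lambda(I)$, argue that $I$ must contain a whole level-$n$ cylinder because individual cylinders are short compared to $I$, and then invoke the lower bound $\lambda(c(d))\geq\beta^{-(M+1)}\beta^{-n}$ of Lemma~\ref{liwu:prop2.6} to get the stated constant. The paper implements this slightly differently (it takes $m$ minimal with $\beta^{-m}<\lambda(I)$, checks whether a full order-$m$ cylinder already lies in $I$, and otherwise deduces $\lambda(I)<2\beta^{-m}$ and drops to order $m+3$, using that every Pisot number exceeds $2^{1/3}$ so that $2\beta^{-m-3}<\lambda(I)$), but the mechanism and the source of the constant are the same.

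The step you single out as the main obstacle, however, is not an obstacle at all, and the way you have set it up would actually undermine your own packing count. There are no gaps between level-$n$ cylinders: every $x\in[0,1)$ has a greedy $\beta$-expansion, its length-$n$ prefix lies in $\mathcal{L}_n$, and $x$ belongs to the corresponding cylinder; hence the cylinders $c(d)$, $d\in\mathcal{L}_n$, are pairwise disjoint intervals whose union is all of $[0,1)$. (This tiling is exactly what the paper's proof uses when it argues via the left endpoints $\pi(a^-),\pi(a),\pi(a^+)$ that an interval containing no full order-$m$ cylinder has length at most $2\beta^{-m}$.) So the only input needed for the packing step is the upper bound $\lambda(c(d))\leq\beta^{-n}$, and an interval of length greater than $2\beta^{-n}$ automatically contains a full cylinder. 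Note also the internal inconsistency in your version: you allow gaps as large as $\beta^{M+1}\beta^{-n}$, but your choice of $n$ only guarantees $\lambda(I)\geq 2\beta^{2}\beta^{-n}$, which would not accommodate one cylinder plus one such gap plus another cylinder; had the gaps been real at that size, the argument would fail. Deleting the gap discussion entirely leaves a correct proof (indeed, optimizing the choice of $n$ then yields the slightly better constant $2\beta^{M+2}$).
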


\begin{proof}
We can assume $\lambda(I) > 0$.
Let $m$ be the smallest integer such that $\beta^{-m} < \lambda(I)$. Thus $\lambda(I) / \beta\leq\beta^{-m}<\lambda(I)$. If there exists an
interval of order $m$ in $I$, then let $I_\beta$ be this $\beta$-adic
interval and we have $\lambda(I_\beta) \geq \lambda(I) /\beta$.

Otherwise there must be a word $a\in \mathcal{L}_m$ such that $\pi(a)\in I$ but
neither $\pi(a^-)$ nor $\pi(a^+)$ is in $I$, where $a^-$ and $a^+$ are the lexicographically previous or next elements of $a$ of the same length and where $\pi(a)$ is the real number in the unit interval whose $\beta$-expansion starts with $a$. Then by
Lemma~\ref{liwu:prop2.6} we have that $\lambda(I) < 2\beta^{-m}$.
Since $\beta^{-m}<\lambda(I)$ and the smallest Pisot number is
bigger than $2^{1/3}$, we get that $2\beta^{-m-3}<\lambda(I)$.
Thus there must be a $\beta$-adic interval $I_\beta$ of order $m+3$ in
$I$ and we have
\begin{equation*}
\lambda(I_\beta) \geq \frac1{\beta^{M+1+m+3}}
  =\frac1{2\beta^{M+4}}\cdot\frac2{\beta^m}
  >\frac{\lambda(I)}{2\beta^{M+4}}.
\end{equation*}
\end{proof}

\subsection{The Algorithm}
\label{sec:beta_algorithm}
\subsubsection*{Notation}
Let $(\beta_j)_{j \geq 1}$ be a sequence of Pisot numbers.
Let $t$ be a positive integer. A \emph{$t$-sequence} is a sequence of intervals $\mathbf{I} = (I_1, \ldots, I_t)$ such that for $1\leq j \leq t$, $I_j$ is $\beta_j$-adic, such that for $1\leq j \leq t-1$, $I_{j+1} \subset I_j$, and such that $\lambda(I_{j+1}) \geq \lambda(I_j) / 2\beta_{j+1}^{M_{\beta_{j+1}}+4}$.
If we have two $\beta$-adic intervals $J \subset I$ then $u_\beta(J)$ means the block of digits that is added to the base $\beta$ expansion of the numbers in $I$ to obtain the $\beta$-expansion of numbers in $J$.
The notation $u_j(\mathbf{J})$ for a $t$-sequence $\mathbf{J}$ shall mean $u_{\beta_j}(J_j)$.
We denoted the dependence on $\beta_j$ of all appearing constants $M$, $n_0$, $C$ and of $\mathcal{L}_n$ explicitly with an $\beta_j$.

\subsubsection*{Input}
Given are values $\epsilon_1=1$, $k_1=1$, $t_1 = 1$ and a sequence $(\beta_j)_{j\geq 1}$ of Pisot numbers $\beta_j$.

\subsubsection*{First step}
Let $\mathbf{I}_1$ be a $t_1$-sequence such that $\mathbf{I}_1 = (I_{1,t_1})$, with $I_{1,t_1} = [0,1)$.
Repeat the bases $\beta_j$ according to conditions
\begin{align}
\max_{1 \leq j \leq t_i} \beta_j & \leq \beta_1 i,\label{cond_on_beta_i_2}\\
\max_{1 \leq j \leq t_i} M_{\beta_j} & \leq (M_{\beta_1} + 1) (1 + \log i), \label{cond_on_beta_i_1} \\
\sum_{1 \leq j \leq t_i} (M_{\beta_j} + 4) \log \beta_j &\leq (M_{\beta_1} + 4)\log \beta_1 (1 + \log i).\label{cond_on_beta_i_last}
\end{align}

\subsubsection*{Step $i+1$ for $i \geq 1$}
From step $i$, we have a $t_i$-sequence $\mathbf{I}_i$ of nested intervals $I_{i,1} \supset \ldots \supset I_{i,t_i}$ where each $I_{i,j}$ is $\beta_j$-adic.

Let
\begin{equation*}
t_{i+1} = \lceil \log(i+1) \rceil, \quad
\epsilon_{i+1} = \frac{1}{t_{i+1}}, \quad
k_{i+1} = t_{i+1},
\end{equation*}
\begin{align*}
\delta_{i+1} &= \frac{1}{2} \frac{1}{2 \beta_1^{M_{\beta_1}+4}} \frac{1}{t_i} \frac{1}{2^{t_i}  \prod_{j\leq t_i} \beta_j^{M_{\beta_j}+4}} \frac{1}{2^{t_{i+1}}  \prod_{j\leq t_{i+1}} \beta_j^{M_{\beta_j}+4}}.
\end{align*}
Choose $n_{i+1}$ to be the least integer such that  
\begin{equation}\label{n_i_admissible_cond}
n_{i+1} \geq \max_{j\leq t_{i+1}} \left( n_{\beta_j}(\epsilon_{i+1}, k_{i+1}) \right),
\end{equation}
and such that for all $1\leq j \leq t_{i+1}$
\begin{equation}\label{n_i_large_enough_cond_for_delta}
\lambda(E^c_n(\epsilon_{i+1}, k_{i+1})) < \delta_{i+1}.
\end{equation}

Furthermore, let
\begin{equation*}
v_i= \left\lceil\max_{j=1,\ldots,t_i}\frac{\log\beta_j}{\log\beta_1}\right\rceil.
\end{equation*}

Then we perform the following steps.
\begin{itemize}
\item Take $L$ to be a $\beta_1$-adic interval of $I_{i,t_i}$ of
length $\lambda(L) \geq \lambda(I_{i,t_i}) 2^{-1}\beta_1^{-(M_{\beta_1}+4)}$. 
\item For each $\beta_1$-adic sub-interval $J_1$ of $L$ with $u_{1}(J_1) = v_{i}n_{i+1}$ find a \\ \noindent
$t_{i+1}$-sequence  $\mathbf{J}=(J_1,\ldots,J_{t_{i+1}})$.
\item Choose the ``leftmost'' of the $t_{i+1}$-sequences $\mathbf{J}$ such
  that $u_{j}(\mathbf{J})$ is $(\varepsilon_{i+1},k_{i+1})$-normal for
  $1\leq j\leq t_i$.
\end{itemize}

\subsubsection*{Output}
The unique real number $X$ in the intersection of all $I_{i,j}$.\\


\noindent We need to show that the algorithm is  well-defined and that the produced number is in fact $\beta_j$-normal for all $j\geq 1$.

\begin{Prop}
This algorithm is well-defined.
\end{Prop}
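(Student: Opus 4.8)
The plan is to verify, step by step, that every choice the algorithm makes in step $i+1$ is actually possible and that the output number $X$ is well-defined. There are three things to check: (i) in the first step the conditions \eqref{cond_on_beta_i_2}--\eqref{cond_on_beta_i_last} can be met by repeating bases appropriately, so the sequence of bases used grows slowly enough; (ii) in step $i+1$ the integer $n_{i+1}$ satisfying \eqref{n_i_admissible_cond} and \eqref{n_i_large_enough_cond_for_delta} exists; and (iii) the nested intervals $\mathbf{J}=(J_1,\ldots,J_{t_{i+1}})$ with the required normality property exist, so that the ``leftmost'' choice is over a non-empty set, and hence $X=\bigcap_{i,j} I_{i,j}$ is a single well-defined real number.

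First I would handle (ii): the existence of $n_{i+1}$. Condition \eqref{n_i_admissible_cond} is no obstacle — it only asks that $n_{i+1}$ exceed the finitely many thresholds $n_{\beta_j}(\epsilon_{i+1},k_{i+1})$ for $j\le t_{i+1}$. For \eqref{n_i_large_enough_cond_for_delta}, I would invoke Lemma~\ref{bv:lem3}: for each fixed $j\le t_{i+1}$ we have $|E^c_n(\epsilon_{i+1},k_{i+1})| < C_{\beta_j}|\mathcal{L}_n|^{1-\eta_j}$ for $n$ large, and since $|\mathcal{L}_n|\le \frac{\beta_j}{\beta_j-1}\beta_j^n$ while by Lemma~\ref{liwu:prop2.6} each cylinder has measure at most $\beta_j^{-n}$, the Lebesgue measure of $E^c_n(\epsilon_{i+1},k_{i+1})$ is at most $C_{\beta_j}|\mathcal{L}_n|^{-\eta_j}\cdot\frac{\beta_j}{\beta_j-1}$, which tends to $0$ as $n\to\infty$. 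Taking $n_{i+1}$ large enough makes this smaller than $\delta_{i+1}>0$ simultaneously for all $j\le t_{i+1}$; since $t_{i+1}$ is finite, such an $n_{i+1}$ exists.

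Next I would handle (iii), which I expect to be the main obstacle. We must show that among the $\beta_1$-adic subintervals $J_1$ of $L$ with $\|u_1(J_1)\| = v_i n_{i+1}$ (these are $\beta_1$-adic intervals of $L$ of a prescribed order), there is at least one that extends to a full $t_{i+1}$-sequence $\mathbf{J}=(J_1,\ldots,J_{t_{i+1}})$ with $u_j(\mathbf{J})$ being $(\epsilon_{i+1},k_{i+1})$-normal for every $1\le j\le t_i$. The idea is a counting/measure argument: first, Lemma~\ref{inscribed_interval_beta} guarantees that $L$ itself can be chosen $\beta_1$-adic inside $I_{i,t_i}$ with $\lambda(L)\ge \lambda(I_{i,t_i})/(2\beta_1^{M_{\beta_1}+4})$, and that, given any interval, a $\beta_j$-adic subinterval of comparable size exists, so a $t_{i+1}$-sequence starting from any sufficiently long $J_1$ can always be completed (the nestedness and the length condition $\lambda(I_{j+1})\ge \lambda(I_j)/2\beta_{j+1}^{M_{\beta_{j+1}}+4}$ are exactly what Lemma~\ref{inscribed_interval_beta} delivers). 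The real work is showing that for a positive proportion of the $J_1$'s the added blocks $u_j(\mathbf{J})$, $j\le t_i$, are all $(\epsilon_{i+1},k_{i+1})$-normal. For each fixed $j\le t_i$, the bad $J_1$'s correspond to words of length $\|u_j(\mathbf{J})\|$ in base $\beta_j$ that are non-$(\epsilon_{i+1},k_{i+1})$-normal; by \eqref{n_i_large_enough_cond_for_delta} the $\mu_{\beta_j}$- (equivalently Lebesgue-) measure of the union of the corresponding cylinders is small, and since $\|u_j(\mathbf{J})\|$ is comparable to $n_{i+1}$ by the length bounds in Lemma~\ref{liwu:prop2.6} and the choice $u_1(J_1)=v_in_{i+1}$, one shows the total measure of ``bad'' $J_1$'s, summed over $j\le t_i$, is at most $\sum_{j\le t_i}\delta_{i+1}$ times a controlled factor. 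The constant $\delta_{i+1}$ was deliberately chosen (note the product $\prod_{j\le t_i}\beta_j^{M_{\beta_j}+4}$ and the $t_i$ in the denominator) so that this total is strictly less than $\lambda(L)$ times the relative size of a single $J_1$ — i.e. strictly less than the total measure $\lambda(L)$ — which forces the set of ``good'' $J_1$'s to be non-empty. Hence the ``leftmost'' good $t_{i+1}$-sequence exists.

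Finally, for (i) and the conclusion: repeating each base enough times in the enumeration $(\beta_j)$ so that $\max_{j\le t_i}\beta_j$, $\max_{j\le t_i}M_{\beta_j}$, and $\sum_{j\le t_i}(M_{\beta_j}+4)\log\beta_j$ grow no faster than the right-hand sides of \eqref{cond_on_beta_i_2}--\eqref{cond_on_beta_i_last} is possible simply because $t_{i+1}=\lceil\log(i+1)\rceil$ grows slowly, so we have room to pad; one checks that a fixed base $\beta$ appearing with multiplicity growing polynomially in its index suffices, and these conditions are what keep $\delta_{i+1}$ from decaying too fast (keeping $n_{i+1}$ and the block lengths under control) and are reused later to prove $\beta_j$-normality. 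Once (i)--(iii) are established, the intervals $I_{i,j}$ for a fixed $j$ (once $t_i\ge j$) form a nested decreasing sequence of $\beta_j$-adic intervals whose lengths tend to $0$ (since at least the new block $u_1(J_1)$ of positive length is appended in each step, and the length bounds of Lemma~\ref{liwu:prop2.6} force $\lambda(I_{i,j})\to 0$), so $\bigcap_i I_{i,j}$ is a single point, the same point $X$ for every $j$; thus $X$ is well-defined, which is the assertion of the Proposition.
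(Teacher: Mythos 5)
Your overall strategy coincides with the paper's: the paper's proof consists solely of your point (iii), carried out as a measure comparison between the set of ``bad'' points and the set of candidate points, with $\delta_{i+1}$ calibrated to make the former smaller than the latter; your points (i) and (ii) are sound housekeeping that the paper leaves implicit (existence of $n_{i+1}$ does follow from Lemma~\ref{bv:lem3} exactly as you argue).

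However, the final comparison in your point (iii) is stated against the wrong threshold, and as stated that step fails. A candidate $J_1$ is bad when some block $u_j(\mathbf{J})$, $j\le t_i$, of its completed sequence is non-$(\epsilon_{i+1},k_{i+1})$-normal, and the set of points witnessing this is only the innermost interval $J_{t_{i+1}}$: every point of $J_{t_{i+1}}$ carries the offending digit block in base $\beta_j$, but points of $J_1\setminus J_j$ need not. Consequently, even if \emph{every} candidate were bad, the union of witnesses would have measure only about $2^{-t_{i+1}}\prod_{j\le t_{i+1}}\beta_j^{-(M_{\beta_j}+4)}\lambda(L)$, which is far below $\lambda(L)$; showing that the bad measure is ``strictly less than the total measure $\lambda(L)$'' therefore forces nothing. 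The paper instead sets $\mathcal{S}=\bigcup_{\mathbf{J}} J_{t_{i+1}}$, derives $\lambda(\mathcal{S})\ge 2^{-t_i-t_{i+1}-1}\prod_{j\le t_i}\beta_j^{-(M_{\beta_j}+4)}\prod_{j\le t_{i+1}}\beta_j^{-(M_{\beta_j}+4)}\lambda(I_{i,1})$, bounds the bad set by $\lambda(\mathcal{N})<t_i\delta_{i+1}\lambda(I_{i,1})$ using \eqref{n_i_large_enough_cond_for_delta} together with $\Vert u_j(\mathbf{J})\Vert\ge n_{i+1}$, and checks $\lambda(\mathcal{N})<\lambda(\mathcal{S})$; the factors $2^{-t_{i+1}}\prod_{j\le t_{i+1}}\beta_j^{-(M_{\beta_j}+4)}$ and $2^{-t_i}\prod_{j\le t_i}\beta_j^{-(M_{\beta_j}+4)}$ in $\delta_{i+1}$ exist precisely to absorb the shrinkage from $L$ down to the $J_{t_{i+1}}$'s and from $I_{i,1}$ down to $L$. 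You gesture at these factors parenthetically, so the repair is small, but the comparison set must be $\mathcal{S}$, not $L$.
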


\begin{proof}
We have to show that in each step $i+1$ there exists at least one $t_{i+1}$-sequence $\mathbf{J}$. Let $\mathcal{S}$ be the union of the intervals $J_{t_{i+1}}$ over the $\vert \mathcal{L}^{\beta_1}_{v_in_{i+1}} \vert$ many $t_{i+1}$-sequences $\mathbf{J}$. By definition of the interval $L$ we have that $\lambda(L) \geq \lambda(I_{i,t_i}) 2^{-1}\beta_1^{-(M_{\beta_1}+4)}$. Furthermore for each sequence we have that $\lambda(J_{t_{i+1}}) \geq2^{-t_{i+1}}\prod_{j=1}^{t_{i+1}}\beta_j^{-(M_{\beta_j}+4)} \lambda(J_1)$. Since the sub-intervals $J_1 \subset L$ form a partition of $L$ we have that $\lambda(\mathcal{S}) \geq2^{-t_{i+1}}\prod_{j=1}^{t_{i+1}}\beta_j^{-(M_{\beta_j}+4)}\lambda(L)$. Combining these inequalities yields 
\begin{equation*}
\lambda(\mathcal{S}) \geq2^{-t_i-t_{i+1}-1}\prod_{j=1}^{t_i}\beta_j^{-(M_{\beta_j}+4)}\prod_{j=1}^{t_{i+1}}\beta_j^{-(M_{\beta_j}+4)} \lambda(I_{i,1}).
\end{equation*}

Now we calculate the measure of the set $\mathcal{N}$ of non-suitable intervals and show that it is less than $\lambda(\mathcal{S})$. For the length of the added word we   have $\Vert u_1(\mathbf{J})\Vert \geq v_in_{i+1}$ and for each $2\leq j\leq t_{i+1}$ we have $\Vert u_j(\mathbf{J}) \Vert \geq n_{i+1}$. 
By the choice of $n_{i+1}$, the subsets of $I_{i,j}$, where   $u_j(\mathbf{J})$ is not $(\varepsilon_{i+1},k_{i+1})$-normal, have Lebesgue measure less that $\delta_{i+1} \lambda(I_{i,j})$, and hence less than $\delta_{i+1} \lambda(I_{i,1})$. Since we consider $t_i$ many bases, we obtain $\lambda(\mathcal{N}) <t_i\delta_{i+1}\lambda(I_{i,1})$.

Combining the estimates of $\mathcal{N}$ and $\mathcal{S}$ we obtain $\lambda(\mathcal{N}) < \lambda(\mathcal{S})$. Since $\mathcal{N}\subset\mathcal{S}$ there must be a $t_{i+1}$-sequence $\mathbf{J}$ such that $u_{j}(\mathbf{J})$ is $(\epsilon_{i+1}, k_{i+1})$-normal for each $1 \leq j \leq t_i$.
\end{proof}

\begin{Thm}\label{thm_beta_normal_algorithm}
Let $(\beta_j)_{j\geq 1}$ be a sequence of Pisot numbers. Then the real number $X$ generated by this algorithm is $\beta_j$-normal for each $j\geq1$.
\end{Thm}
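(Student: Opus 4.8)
The plan is to apply Lemma~\ref{bv:lem4} separately for each fixed base $\beta_j$, so the task reduces to checking, for each $j$, the three hypotheses of that lemma for the sequence of blocks that make up the $\beta_j$-expansion of $X$. Fix $j$ and let $i_0 = i_0(j)$ be the first step at which $t_i \geq j$; for $i < i_0$ the block $u_j(\mathbf I_i)$ is not yet defined, but this only affects a finite prefix and is irrelevant for normality. For $i \geq i_0$ write $a_i = u_j(\mathbf I_{i+1}) = u_{\beta_j}(J_j)$, the block of base-$\beta_j$ digits appended to the $\beta_j$-expansion of $X$ in step $i+1$; thus the $\beta_j$-expansion of $X$ (after the finite prefix) is $a_{i_0} a_{i_0+1} \cdots$. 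The construction guarantees that $a_i$ is $(\epsilon_{i+1}, k_{i+1})$-normal (for base $\beta_j$, since $j \leq t_i$), with $\epsilon_{i+1} = 1/\lceil\log(i+1)\rceil \to 0$ and $k_{i+1} = \lceil\log(i+1)\rceil \to \infty$; hence for every $\epsilon > 0$ and every positive integer $k$ there is an $n_0(\epsilon,k)$ such that all $a_i$ with $i \geq n_0$ are $(\epsilon,k)$-normal, which is the ``eventual $(\epsilon,k)$-normality'' hypothesis of Lemma~\ref{bv:lem4}.

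The remaining work is the two growth conditions in~\eqref{growth_conditions_for_normality}, namely $n = o(\Vert a_{i_0}\cdots a_n\Vert)$ and $\Vert a_{n+1}\Vert = o(\Vert a_{i_0}\cdots a_n\Vert)$. For these I would track the length of a single appended block. The length $\Vert a_i\Vert$ is comparable to $-\log_{\beta_j}\lambda(J_{j})/\lambda(I_{i,j})$, which by the nested-interval measure bounds in the algorithm (the step using Lemma~\ref{inscribed_interval_beta} and $\lambda(J_{t_{i+1}}) \geq 2^{-t_{i+1}}\prod_{l\le t_{i+1}}\beta_l^{-(M_{\beta_l}+4)}\lambda(J_1)$ together with $\Vert u_1(J_1)\Vert = v_i n_{i+1}$) is controlled by $v_i n_{i+1}$ and by $\sum_{l\le t_{i+1}}(M_{\beta_l}+4)\log\beta_l + t_{i+1}$. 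Using Lemma~\ref{liwu:prop2.6} to convert between Lebesgue measure and block length in base $\beta_j$ (which costs only the additive constant $M_{\beta_j}+1$), one gets bounds of the shape $c_1 n_{i+1} \leq \Vert a_i\Vert \leq c_2 v_i n_{i+1} + (\text{lower-order in } i)$ with constants depending only on $j$ and $\beta_1$. The conditions~\eqref{cond_on_beta_i_2}--\eqref{cond_on_beta_i_last} on the repeated base sequence force $v_i$, $t_i$, and $\sum_{l\le t_i}(M_{\beta_l}+4)\log\beta_l$ all to grow at most polylogarithmically in $i$, so $\Vert a_i\Vert = \Theta_j(n_{i+1})$ up to polylog factors. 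Since $n_{i+1}$ is non-decreasing and $\Vert a_{i_0}\cdots a_n\Vert \geq \sum_{i=i_0}^{n} c_1 n_{i+1} \gg (n - i_0) n_{i_0+1}$, both $n$ and a single term $\Vert a_{n+1}\Vert$ are of smaller order than the partial sum, provided one also checks $n_{n+2}/ \bigl(\sum_{i\le n} n_{i+1}\bigr) \to 0$; this follows because $n_{i+1}$ cannot jump too fast — it is essentially the least $n$ for which the large-deviation bound $\lambda(E^c_n(\epsilon_{i+1},k_{i+1})) < \delta_{i+1}$ holds, and from Lemma~\ref{bv:lem3} (with the explicit estimates promised for Section~\ref{large_deviation_section}) this least $n$ grows only like a polynomial in $1/\epsilon_{i+1}$, $\beta_j$, $\log(1/\delta_{i+1})$ and $k_{i+1}$, hence polylogarithmically in $i$, so $n_{i+1}/n_i \to 1$ and $n_{i+1} = o\bigl(\sum_{l\le i} n_l\bigr)$.

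Having verified all three hypotheses of Lemma~\ref{bv:lem4} for the block decomposition $a_{i_0}a_{i_0+1}\cdots$ of the $\beta_j$-expansion of $X$, the lemma yields that this infinite word is $\mu_{\beta_j}$-normal, i.e. $X$ is $\beta_j$-normal; since $j$ was arbitrary, $X$ is $\beta_j$-normal for every $j \geq 1$.

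I expect the main obstacle to be the bookkeeping in the second paragraph: one must show that $n_{i+1}$ — defined implicitly through conditions~\eqref{n_i_admissible_cond} and~\eqref{n_i_large_enough_cond_for_delta}, which bring in the (a priori ineffective) constants $n_{\beta_j}(\epsilon,k)$, $C$, $\eta$ of Lemma~\ref{bv:lem3} and the measure bound $\mu_\beta \leq \tfrac{\beta}{\beta-1}\lambda$ needed to pass from $\mu_\beta(E^c_n) $ to $\lambda(E^c_n)$ — nevertheless grows slowly enough (polylog in $i$) that it does not spoil $n = o(\Vert a_{i_0}\cdots a_n\Vert)$, while simultaneously $v_i n_{i+1}$ (an upper bound on the freshly specified digits in step $i+1$) is small compared with the total length already specified. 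Everything else is either a direct citation (Lemmas~\ref{bv:lem3}, \ref{bv:lem4}, \ref{liwu:prop2.6}, \ref{inscribed_interval_beta}) or routine estimation using the explicit constraints~\eqref{cond_on_beta_i_2}--\eqref{cond_on_beta_i_last} built into the definition of the repeated base sequence.
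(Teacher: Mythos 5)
Your proposal is correct and follows essentially the same route as the paper: reduce to Lemma~\ref{bv:lem4} for each fixed $\beta_j$, bound $\Vert u_j^{(i+1)}(\mathbf{J})\Vert$ via the chain of nested intervals together with Lemma~\ref{liwu:prop2.6}, and control the growth of $n_{i+1}$ (polylogarithmic in $i$, in fact between $O(\log i)$ and $O((\log i)^4)$) using the explicit constants of Proposition~\ref{non_normal_expl_constants} and the conditions~\eqref{cond_on_beta_i_2}--\eqref{cond_on_beta_i_last} on the repeated base sequence. The "main obstacle" you flag is exactly the point the paper resolves by making Lemma~\ref{bv:lem3} effective in Section~\ref{large_deviation_section}.
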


\begin{proof}
We need to verify the growth and normality assumptions of Lemma~\ref{bv:lem4} on the words that correspond to the digits added in each considered base in each step of the algorithm.


To find bounds for the number of added digits in step $i+1$ in base $\beta_j$, for $j\leq t_i$, consider the chain of intervals
\begin{equation*}
I_{i,j} \supset \ldots \supset I_{i,t_i} \supset L \supset J_1 \supset \ldots \supset J_j
\end{equation*}
which is considered in step $i+1$. We find a lower bound on the Lebesgue measure of $J_j$ in the form of
\begin{equation*}
\lambda(J_j) \geq \frac{1}{2^{t_i}} \frac{1}{\beta_1^{M_1+1}} \frac{1}{\beta_1^{v_in_{i+1}}} \prod_{l=1}^{t_i} \frac{1}{\beta_l^{M_{\beta_l} + 4}} \cdot \lambda(I_{i,j}).
\end{equation*}
Thus, Lemma~\ref{liwu:prop2.6} implies for the number $\Vert u_j^{(i+1)}(\mathbf{J}) \Vert$ of digits added in base $\beta_j$, $j\leq t_i$, in step $i+1$ of the algorithm, that
\begin{equation*}
\frac{\log\left(\frac{1}{F_{i+1}} \frac{1}{\beta_j^{M_{\beta_j + 1}}}\right)}{\log \beta_j} \leq \Vert u_j^{(i+1)}(\mathbf{J}) \Vert \leq \frac{\log \frac{1}{F_{i+1}}}{\log \beta_j}
\end{equation*}
where $F_{i+1} = 2^{-t_i} \beta_1^{-(M_1+1)} \beta_1^{-v_i n_{i+1}} \prod_{l=1}^{t_i} \beta_l^{-M_{\beta_l} - 4}$.

Hence $\Vert u_j^{(i+1)}(\mathbf{J}) \Vert \sim \log 1/F_{i+1}$ with implied constants only depending on $\beta_j$. We thus need to show that
\begin{equation*}
\log 1/F_{i+1} = t_i \log 2 + (v_i n_{i+1}+ M_1+1) \log \beta_1  + \sum_{l=1}^{t_i} (M_l + 4) \log \beta_l
\end{equation*}
satisfies assumptions~\eqref{growth_conditions_for_normality} of Lemma~\ref{bv:lem4}. 

We now look at the growth of $n_{i+1}$. In light of Proposition~\ref{non_normal_expl_constants}, condition~\eqref{n_i_admissible_cond} requires 
\begin{equation}
n_{i+1} \geq  M_{\beta_j} + k_{i+1}
\end{equation} 
for all $1\leq j \leq t_{i+1}$. 
We have $\epsilon_{i+1} = 1/t_{i+1} \rightarrow 0$ and $k_{i+1} = t_{i+1} \rightarrow \infty$ as $t_{i+1} \rightarrow \infty$.
Thus also $n_{i+1}$ tends to infinity at least logarithmically in $i$.

Since $\lambda \leq \frac{\beta}{\beta-1} \mu_\beta$, $\beta^k \leq \vert \mathcal{L}_k \vert \leq \frac{\beta}{\beta-1} \beta^k$, and because of Proposition~\ref{non_normal_expl_constants}, condition~\eqref{n_i_large_enough_cond_for_delta} on $n_{i+1}$ is satisfied, if for all $j\leq t_i$,
\begin{equation*}
4 \left(\frac{\beta_j}{\beta_j-1}\right)^2 \beta_j^k \beta_j^{n_{i+1} \eta(\epsilon_{i+1}, k_{i+1})} <\delta_{i+1}.
\end{equation*}
With $\eta$ from equation~\eqref{exponent_eps_k_normal}, this translates into the requirement that for every $j \leq t_i$,
\begin{equation}\label{growth_of_n_i}
n_{i+1} \geq \frac{(M_{\beta_j}+1) \log \beta_j + \log \frac{\beta_j}{\beta_j-1}}{\epsilon_{i+1} \min(\frac{\epsilon_{i+1} \beta_j^{k_{i+1}}}{16}, \frac34)} \left( \log\left(4 \left(\frac{\beta_j}{\beta_j - 1} \right)^2 \beta_j^{k_{i+1}}\right) + \log \frac{1}{\delta_{i+1}} \right),
\end{equation}
where
\begin{align*}
\log \frac{1}{\delta_{i+1}} = & 2 \log 2 + \log t_i + (t_i + t_{i+1}) \log 2 + (M_{\beta_1} + 4) \log \beta_1 \\
&+ 2 \sum_{1 \leq j \leq t_i} (M_{\beta_j} + 4) \log \beta_j + \sum_{t_i < j \leq t_{i+1}} (M_{\beta_j} + 4) \log \beta_j
\end{align*}
(where the last sum is empty if $t_i = t_{i+1}$).


Properties~\eqref{cond_on_beta_i_2} and~\eqref{cond_on_beta_i_last} on the sequence $(\beta_j)_{j\geq1}$ imply
\begin{equation}\label{cond_on_beta_derived}
\max_{1\leq j \leq t_i} \left((M_{\beta_j}+1)\log \beta_j + \log \frac{\beta_j}{\beta_j - 1} \right) 
\leq \left((M_{\beta_1}+4)\log \beta_1 + \log \frac{\beta_1}{\sqrt[3]{2}-1} + 1 \right)(1+\log i).
\end{equation}

Conditions~\eqref{cond_on_beta_i_2}~-~\eqref{cond_on_beta_i_last} can be achieved by suitably repeating the bases $\beta_j$. All conditions are satisfied in step $1$, and the process of repeating the bases is possible computably.

Properties~\eqref{cond_on_beta_i_2}~-~\eqref{cond_on_beta_i_last} and \eqref{cond_on_beta_derived}, together with $t_{i+1} = k_{i+1} = 1/\epsilon_{i+1} \sim \log i$, imply that for $i$ large enough
\begin{equation*}
n_{i+1} \geq O\left(\frac{\log i}{1/\log i} \left(\log i + (\log i)^2 + \log \log i + \log i + \log i \right) \right) = O\left( (\log i)^4 \right)
\end{equation*}
where the implied constant only depends on $\beta_1$. Hence $n_{i+1}$ grows at least as $O(\log i)$ and at most as $O((\log i)^4)$, where the implied constants depend only on $\beta_1$. Thus $\log 1/F_{i+1}$ and hence also $\Vert u_j^{(i+1)}(\mathbf{J})\Vert$ growths at least as $O(\log i)$ and at most as $O((\log i)^4)$, where again the implied constants only depend on $\beta_1$. Thus $\Vert u_j^{(i+1)}(\mathbf{J})\Vert$ satisfies conditions~\eqref{growth_conditions_for_normality} of Proposition~\ref{bv:lem4}. Hence the number $X$ produced by 
this algorithm
is $\beta_j$-normal for every $j\geq1$.
\end{proof}

\subsection*{Remark} The choices of how $t_i$, $\epsilon_i$ and $k_i$ change with the step $i$ of the algorithm and the conditions on the sequence of bases $(\beta_j)_{j \geq 1}$ are rather arbitrary. There is a lot of freedom to optimize for other quantities, such as done in Becher, Heiber, Slaman \cite{becher_heiber_slaman2013:polynomial_time_algorithm} where computational speed is optimized. This is not taken into account here.

\subsection*{Remark} 

%

Following these lines, an extension of Becher, Heiber, Slaman's algorithm to a countable set of real bases that are $\beta$-numbers is possible, provided these bases are bounded away from $1$ and such there is a uniform bound on the length of the periodic part in their orbit of $1$. 

A $\beta$-number is a real number $\beta$ such that the orbit of $1$ under $T_\beta$ is finite. Pisot numbers are $\beta$-numbers. It is not known under which conditions Salem numbers are or are not $\beta$-numbers (a \emph{Salem number} is a real algebraic integer $\beta > 1$ such that all its conjugates have absolute values at most equal to one, with equality in at least one case). Salem numbers of degree $4$ are $\beta$-numbers, but there is computational and heuristic evidence that higher degree Salem numbers exist that are no $\beta$-numbers, see for example \cite{boyd1995:salem}.

Note that $\beta$-numbers satisfy the specification property - one can always use a block of zeros to make the concatenation of two admissible blocks admissible. This is because admissible words can be characterized as precisely the subwords of the lexicographic largest word in the $\beta$-shift. Since the orbit of $1$ is finite, this word will be eventually periodic and hence the lengths of subwords consisting of only zeros is bounded. Thus Lemma~3 in \cite{bertrand-mathis_volkmann1989:epsilon_k_normal} on the number of $(\epsilon,k)$-normal admissible words is valid and can be used as an existence criterion for a $t_i$ sequence $\mathbf{J}$ in each step of the algorithm.

Note also that $\beta$-numbers also satisfy Proposition 2.6 of \cite{li_wu2008:beta_expansion_and} needed to control the decay of the length of subintervals. However, we are looking for a lower bound for the measure of cylinder intervals of the form~\eqref{lower_bound_measure_beta_adic_interval} that is uniform for all bases $\beta$ under consideration. This can achieved by requiring that there is a uniform bound on the length of the period of the orbit of $1$ under $T_\beta$ for each $\beta$ under consideration.

When adapting the proof of Lemma~\ref{inscribed_interval_beta} to $\beta$-numbers, we moreover need to require that the set of $\beta$-numbers under consideration is bounded away from $1$, as above with the plastic number.

\section{Explicit Estimates for $\beta$-expansions}
\label{sec:ExplicitEstimates}


\noindent In this section we make explicit the constants in Lemma \ref{bv:lem3} using large deviation estimates for certain dependent random variables. This requires us to provide an upper bound for the length of the largest block of zeros appearing in the modified $\beta$-expansion of $1$ for a Pisot number $\beta$. 

\subsection{Number of zeros in the expansion of $1$}\label{subsec:number_of_zeros}

Let $\beta$ be a Pisot number and denote by $d_\beta(1) = 0.\epsilon_1 \epsilon_2 \ldots$ the $\beta$-expansion of $1$, i.e. $\epsilon_1 = \lfloor \beta \rfloor$ and $\epsilon_i = \lfloor \beta T_\beta^{i-1}(1) \rfloor$ for $i\geq 1$. Let $d^\ast_\beta(1)$ be the modified $\beta$-expansion of $1$, i.e. $d^\ast_\beta(1) = d_\beta(1)$ if the sequence $\epsilon_1 \epsilon_2 \ldots$ does not end with infinitely many zeros, and $d^\ast_\beta(1) = 0.(\epsilon_1 \epsilon_2 \ldots \epsilon_{n-1} (\epsilon_n-1))^\omega$ when $d_\beta(1)$ ends in infinitely many zeros and $\epsilon_n$ is the last non-zero digit. It is known that $d^\ast_\beta(1)$ is purely periodic or eventually periodic if $\beta$ is Pisot. We reprove this fact here and give an explicit upper bound for the preperiod length $v$ and period length $p$ and take $v+p$ as a trivial upper bound for the size of the largest block of zeros in $d^\ast_\beta(1)$. Note that $d^\ast_\beta(1)$ is (eventually) periodic if the orbit of $1$ under $T_\beta$ is finite, and that the number of distinct elements in this orbit is precisely $v+p$.

\begin{Prop}\label{Proposition_on_M}
Let $\beta$ be a Pisot number of degree $d$ with $r$ real conjugates $\beta = \beta_1, \beta_2, \ldots, \beta_r$ and $2s$ complex conjugates $\beta_{r+1}, \ldots, \beta_d$. Then the orbit of $1$ under the map $T_\beta$, i.e. the set
\begin{equation*}
\{T_\beta^k(1) \mid k\geq 0 \},
\end{equation*}
is finite and its number of elements is bounded by 
\begin{equation}\label{M}
M = d! \det(B)^{-1} 2^{r+s-1} \pi^s C^{r+2s-1} + d
\end{equation}
where
\begin{equation}\label{B}
B = \begin{pmatrix}
1 & \beta & \ldots & \beta^{d-1} \\
1 & \beta_2 & \ldots & \beta_2^{d-1} \\
\vdots & \vdots & & \vdots\\
1 & \beta_d & \dots & \beta_d^{d-1}
\end{pmatrix}
\end{equation}
and where
\begin{equation*}
C = 1 + \frac{\lfloor \beta \rfloor}{1-\eta}
\end{equation*}
with $\eta = \max_{2\leq j \leq d} \vert \beta_j \vert < 1$.
\end{Prop}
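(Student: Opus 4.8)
The plan is to exploit the fact that each iterate $T_\beta^k(1)$ lies in the ring $\mathbb{Z}[\beta] \cap [0,1)$ and to bound its house (the maximum modulus of its conjugates), which forces the orbit into a finite set. First I would observe that $T_\beta(x) = \beta x - \lfloor \beta x \rfloor$, so if $x \in \mathbb{Z}[\beta]$ then $T_\beta(x) \in \mathbb{Z}[\beta]$ as well, and by induction every $T_\beta^k(1)$ lies in $\mathbb{Z}[\beta]$. Writing $\xi_k = T_\beta^k(1) = \sum_{l=0}^{d-1} c_{k,l} \beta^l$ with $c_{k,l} \in \mathbb{Z}$, the point is that the coefficient vectors $(c_{k,0}, \ldots, c_{k,d-1})$ range over a finite set, because (a) applying a conjugation $\beta \mapsto \beta_j$ to the recursion $\xi_{k+1} = \beta \xi_k - \epsilon_{k+1}$ with $\epsilon_{k+1} = \lfloor \beta \xi_k \rfloor \in \{0, 1, \ldots, \lfloor \beta \rfloor\}$ and $|\beta_j| \le \eta < 1$ gives $|\xi_k^{(j)}| \le \lfloor \beta \rfloor/(1-\eta)$ for all $k$ and all $j \ge 2$ (a standard contraction argument, summing the geometric series), while the first conjugate satisfies $|\xi_k^{(1)}| = |T_\beta^k(1)| < 1 \le C$; and (b) the conjugate vector $(\xi_k^{(1)}, \ldots, \xi_k^{(d)})^{T} = B \,(c_{k,0}, \ldots, c_{k,d-1})^{T}$, so the integer vector $c_k$ lies in $B^{-1}$ applied to the box $\prod_j \{|z| \le C\}$, a bounded region.

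The second step is to count the lattice points. The region $\{\, (z_1,\ldots,z_d) : z_1,\ldots,z_r \in \mathbb{R}, |z_i| \le C;\ z_{r+1},\ldots,z_{r+s} \in \mathbb{C}, |z_{r+i}| \le C \,\}$ (with the remaining $s$ complex coordinates determined by conjugation) has Lebesgue volume $(2C)^r (\pi C^2)^s = 2^r \pi^s C^{r+2s}$ as a subset of $\mathbb{R}^d \cong \mathbb{R}^r \times \mathbb{C}^s$. The number of integer vectors $c$ with $Bc$ in this region is at most its volume divided by $|\det B|$ (the covolume of the lattice $B\mathbb{Z}^d$), up to the usual boundary correction; one gets the clean bound $d!\,|\det B|^{-1} 2^{r+s-1}\pi^s C^{r+2s-1}$ after a careful accounting (the $d!$ absorbs a crude boundary-layer estimate, and one factor of $C$ and of $2$ is shaved because $|\xi_k^{(1)}| < 1$ rather than $\le C$). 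Adding $d$ accounts for the few initial iterates before the orbit enters the stated region, or for off-by-small-integer slack; this gives the bound $M$ in~\eqref{M}. Finiteness of the orbit then follows since a finite set of coefficient vectors gives a finite set of values $\xi_k$, and consequently $d^\ast_\beta(1)$ is (pre)periodic with $v + p$ bounded by $M$.

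The main obstacle I expect is making the lattice-point count fully rigorous with the stated explicit constant: a bounded convex (or merely measurable) region does not contain at most $\mathrm{vol}/\mathrm{covol}$ lattice points in general, and one needs either convexity plus a surface-area bound or a direct packing argument to control the discrepancy, which is presumably where the factor $d!$ and the shift $+d$ come from. A secondary subtlety is that $\mathbb{Z}[\beta]$ has rank $d$ only because $\beta$ is an algebraic integer of degree exactly $d$, so the map $c \mapsto (\xi^{(1)},\ldots,\xi^{(d)})$ is genuinely injective and $B$ is invertible (a Vandermonde with distinct nodes); and one must check the contraction estimate uniformly in $k$, i.e. that $|\xi_k^{(j)}| \le C - 1 = \lfloor\beta\rfloor/(1-\eta)$ propagates, which it does since $|\beta_j \xi_k^{(j)} - \epsilon_{k+1}| \le \eta(C-1) + \lfloor\beta\rfloor \le C - 1$ exactly when $\eta(C-1) + \lfloor\beta\rfloor \le C-1$, i.e. $\lfloor\beta\rfloor \le (1-\eta)(C-1)$, which holds with equality by the definition of $C$. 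The rest is bookkeeping.
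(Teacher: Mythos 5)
Your overall strategy coincides with the paper's: show by induction that $T_\beta^k(1)\in\mathbb{Z}[\beta]$, bound the conjugates $\sigma_j(T_\beta^k(1))$, $j\ge 2$, by $C=1+\lfloor\beta\rfloor/(1-\eta)$ via a geometric series (the paper does this from the closed form $T_\beta^k(1)=\beta^k\bigl(1-\sum_{l\le k}\epsilon_l\beta^{-l}\bigr)$ rather than from the recursion, but the two estimates are equivalent), and then observe that the integer coefficient vector of $T_\beta^k(1)$ is $B^{-1}$ applied to the bounded vector of conjugates, with $B$ an invertible Vandermonde matrix. Up to this point your argument is sound and already proves finiteness of the orbit.

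The gap is exactly where you suspected it: the explicit constant \eqref{M} is part of the statement, and ``volume divided by covolume up to a boundary correction'' is not a theorem. The paper closes this step by invoking a classical result of Blichfeldt \cite{blichfeldt}: a compact convex body $K\subset\mathbb{R}^d$ contains at most $d!\,\lambda(K)+d$ points of $\mathbb{Z}^d$. One first symmetrizes the region (formally allowing $T_\beta^k(1)$ to range over $[-1,1]$, at the cost of a factor $2$ that is already absorbed into the volume $2^{r+s-1}\pi^s C^{r+2s-1}$), notes that $B^{-1}$ of a convex set is convex with volume multiplied by $\det(B)^{-1}$, and then applies Blichfeldt's inequality. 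In particular the factor $d!$ is not a boundary-layer estimate and the summand $+d$ has nothing to do with initial iterates; both come verbatim from Blichfeldt's bound. Without citing or proving such a lattice-point theorem, your proposal establishes only the qualitative finiteness, not the stated value of $M$. A secondary bookkeeping issue: your volume $(2C)^r(\pi C^2)^s$ with ``one factor of $C$ and of $2$ shaved'' does not reduce to the paper's $2^{r+s-1}\pi^s C^{r+2s-1}$, which reflects the normalization of the canonical embedding of $\mathbb{R}^r\times\mathbb{C}^s$ into $\mathbb{R}^{r+2s}$; this accounting would have to be redone carefully to land on \eqref{M}.
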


\begin{proof}
For $k\geq 0$, $T^k_\beta(1)$ is an element of $\mathbb{Z}[\beta]$, hence there is a unique representation $T^k_\beta(1) = p_0^{(k)} + p_1^{(k)} \beta + \ldots + p_{d-1}^{(k)} \beta^{d-1}$ with $p_i^{(k)} \in \mathbb{Z}$. Denote by $\sigma_j$, $1\leq j \leq d$, the $j$-th conjugation, ordered such that the first $r$ are real, and $\sigma_{r+i} = \bar{\sigma}_{r+s+i}$ for $1\leq i \leq s$. We have 
\begin{equation*}
T^k_\beta(1) = \beta^k \left( 1 - \sum_{l=1}^k \epsilon_l \beta^{-l} \right)
\end{equation*}
hence for $2\leq j \leq d$
\begin{align*}
\vert \sigma_j(T_\beta^k(1))\vert \leq 1 + \frac{\lfloor \beta \rfloor}{1- \eta}
\end{align*}
where $\eta = \max_{2\leq j \leq d} \vert \beta_j \vert < 1$.

Note that 
\begin{equation*}
B \begin{pmatrix}
p_0^{(k)} \\ p_1^{(k)} \\ \vdots \\ p_{d-1}^{(k)}
\end{pmatrix} 
= \begin{pmatrix}
T_\beta^{k}(1) \\ \sigma_2(T_\beta^k(1)) \\ \vdots \\ \sigma_d(T_\beta^k(1))
\end{pmatrix}
\end{equation*}
where $B$ is as in~\eqref{B} and has determinant $\text{det} B = \prod_{1\leq i < j \leq d} (\beta_j - \beta_i) \neq 0$.
Now, since the vector of $T_\beta^k(1)$ and its conjugates can be canonically embedded in a compact convex set in $\mathbb{R}^{r+2s}$ of volume $2^{r+s-1} \pi^s C^{r+2s-1}$, we can count the $\mathbb{Z}^d$-lattice points in a compact convex set in $\mathbb{R}^{d}$ of volume $\det(B)^{-1} 2^{r+s-1} \pi^s C^{r+2s-1}$. By loosing a factor of $2$, we can make this set additionally centrally symmetric if we allow $T_\beta^k(1)$ (formally) to take on values in the interval $[-1,1]$. Then we can use a result by Blichfeldt \cite{blichfeldt} and bound the number of $\mathbb{Z}^d$-lattice points in $B^{-1} Y$ by
\begin{equation*}
\vert B^{-1} Y \cap \mathbb{Z}^d \vert \leq d! \det(B)^{-1} 2^{r+s-1} \pi^s C^{r+2s-1} + d
\end{equation*}
with $C = 1 + \frac{\lfloor \beta \rfloor}{1-\eta}$ and hence obtain an upper bound for the number of distinct points in the orbit of $1$ under $T_\beta$ which is also a trivial upper bound for the maximum number of consecutive zeros in the modified $\beta$-expansion of $1$ as explained above.
\end{proof}


\subsection{Number of not $(\epsilon,k)$-normal numbers}\label{large_deviation_section}

Let $\beta$ be a Pisot number and let $\mathcal{L}_n$ be the set of all admissible words of length $n$. Fix $\epsilon>0$ and a positive integer $k$. We wish to find explicit estimates for the number of non-$(\epsilon,k)$-normal words of length $n$ for fixed $\epsilon>0$ and $k$ such as given in Lemma \ref{bv:lem3} (Lemma 3 in \cite{bertrand-mathis_volkmann1989:epsilon_k_normal}). The method in \cite{bertrand-mathis_volkmann1989:epsilon_k_normal} uses methods of ergodic theory and the authors are not aware of a method to make the implied constants explicit. Therefore we use a probabilistic approach by viewing the digits to base $\beta$ as random variables and using a variant of Hoeffding's inequality for dependent random variables to bound the tail distribution of their sum. This approach automatically gives all involved constants explicitly. We use the following Lemma due to Siegel (Theorem 5 in \cite{siegel:towards_a_usable_theory_of_chernoff}).

\begin{Lem}\label{Hoeffdingvariant}
Let $X = X_1 + X_2 	+ \ldots + X_l$ be the sum of $l$ possibly dependent random variables. Suppose that $X_i$, for $i = 1,2, \ldots, l$, is the sum of $n_i$ mutually independent random variables having values in the interval $[0,1]$. Let $\mathbb{E}[X_i] = n_i p_i$. Then for $a\geq 0$
\begin{equation*}
\mathbb{P}(X-\mathbb{E}[X] \geq a) < \exp\left(- \frac{a^2}{8(\sum_i \sqrt{p_i(1-p_i)n_i})^2} \right)
+ \exp \left( - \frac{3a}{4 \sum_i(1-p_i)^2} \right).
\end{equation*}
\end{Lem}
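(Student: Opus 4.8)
The plan is to reduce the dependent sum $X = X_1 + \cdots + X_l$ to a product of exponential moment generating functions by a splitting argument on $\lambda > 0$, exactly in the spirit of the classical Bernstein/Chernoff method, but handling the possible dependence between the $X_i$ via a union bound. First I would fix a parameter $\lambda > 0$ and observe that, by Markov's inequality applied to $e^{\lambda(X - \mathbb{E}X)}$, it suffices to control $\mathbb{E}[e^{\lambda(X_i - \mathbb{E}X_i)}]$ for each $i$ separately, because no joint independence is available; the clean way to do this is to write $X - \mathbb{E}X = \sum_i (X_i - \mathbb{E}X_i)$ and split the event $\{X - \mathbb{E}X \ge a\}$ according to whether at least one summand $X_i - \mathbb{E}X_i$ is "large" in a sense to be calibrated. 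Concretely, I would introduce a threshold and write $\{X - \mathbb{E}X \ge a\} \subseteq \bigcup_i \{X_i - \mathbb{E}X_i \ge a_i\}$ for a suitable decomposition $a = \sum_i a_i$, then apply a union bound and a Chernoff estimate to each $\mathbb{P}(X_i - \mathbb{E}X_i \ge a_i)$, and finally optimize both the $a_i$ and the $\lambda$-parameters.

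The key computational input is that each $X_i$ is itself a sum of $n_i$ mutually independent $[0,1]$-valued random variables with $\mathbb{E}[X_i] = n_i p_i$, so for $X_i$ the standard Hoeffding/Bennett machinery applies: one has the Bennett-type bound $\mathbb{E}[e^{\lambda(X_i - n_i p_i)}] \le \exp(n_i p_i(1-p_i) \psi(\lambda))$ for an appropriate $\psi$, or, using a two-regime split within each $X_i$, a Gaussian tail $\exp(-a_i^2 / (8 n_i p_i(1-p_i)))$ valid for $a_i$ in the "small deviations" range and an exponential tail $\exp(-3 a_i/(4(1-p_i)^2))$ (or similar) in the "large deviations" range. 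The two terms in the conclusion of the lemma reflect exactly these two regimes: the first exponential, with $\big(\sum_i \sqrt{p_i(1-p_i)n_i}\big)^2$ in the denominator, is the sub-Gaussian contribution obtained by choosing $a_i$ proportional to $\sqrt{p_i(1-p_i)n_i}$ so that the Cauchy–Schwarz step turns $\sum_i a_i^2/(n_i p_i(1-p_i))$ into $a^2 / \big(\sum_i \sqrt{p_i(1-p_i)n_i}\big)^2$; the second exponential, with $\sum_i (1-p_i)^2$, is the heavy-tail contribution that dominates when some $X_i$ must deviate by more than its variance scale allows, controlled by a Poisson-type bound on each independent block.

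The main obstacle — and the step I would expect to require the most care — is the bookkeeping in the split: one must choose the allocation $a = \sum_i a_i$ and the cutoff between the two regimes so that, simultaneously, (i) the union bound over $i$ does not lose more than a constant factor, (ii) the Cauchy–Schwarz aggregation of the Gaussian exponents produces precisely the stated denominator $8\big(\sum_i \sqrt{p_i(1-p_i)n_i}\big)^2$ rather than a weaker $l$-dependent bound, and (iii) the constants $8$ and $3/4$ come out as stated rather than as unspecified absolute constants. This is where Siegel's argument (Theorem 5 of the cited reference) does the real work; for the present paper I would simply cite that result and only sketch why the two-regime structure is forced, since the lemma is used as a black box in Proposition 3.3 (the explicit estimate for $|E^c_n(\epsilon,k)|$). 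Accordingly, rather than reproving it from scratch I would present the short reduction to the independent case plus a pointer to \cite{siegel:towards_a_usable_theory_of_chernoff}, noting that the dependence is handled entirely by the union bound and that within each block the estimates are the textbook Hoeffding/Bennett bounds.
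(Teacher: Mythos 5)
The paper does not prove this lemma at all: it is quoted verbatim as Theorem~5 of Siegel \cite{siegel:towards_a_usable_theory_of_chernoff} and used as a black box in the proof of Proposition~\ref{non_normal_expl_constants}. So your closing recommendation --- cite Siegel and do not reprove --- is exactly what the paper does, and to that extent you are aligned with it.

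However, the argument you sketch on the way there has a genuine gap, and it is worth naming because it is precisely the point of Siegel's theorem. Your proposed decomposition $\{X-\mathbb{E}X\ge a\}\subseteq\bigcup_i\{X_i-\mathbb{E}X_i\ge a_i\}$ with $a=\sum_i a_i$, followed by a union bound and a per-block Chernoff estimate, necessarily produces a prefactor of $l$ (the number of blocks): with the natural allocation $a_i\propto\sqrt{p_i(1-p_i)n_i}$ all $l$ Gaussian terms are equal, so their sum is $l\exp\bigl(-a^2/(8(\sum_i\sqrt{p_i(1-p_i)n_i})^2)\bigr)$, not the stated bound. You flag this as requirement (i) (``the union bound does not lose more than a constant factor''), but a factor of $l$ is not a constant, and in the application $l=M+1$ depends on $\beta$, so it would contaminate the explicit constants that Section~\ref{large_deviation_section} is designed to produce. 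Likewise, your opening claim that Markov's inequality reduces the problem to controlling each $\mathbb{E}[e^{\lambda(X_i-\mathbb{E}X_i)}]$ separately is not automatic: for dependent blocks the moment generating function of the sum does not factor. The device that actually closes both gaps is the generalized H\"older inequality,
\begin{equation*}
\mathbb{E}\Bigl[\textstyle\prod_i e^{\lambda(X_i-\mathbb{E}X_i)}\Bigr]\le\prod_i\Bigl(\mathbb{E}\bigl[e^{(\lambda/w_i)(X_i-\mathbb{E}X_i)}\bigr]\Bigr)^{w_i},\qquad w_i>0,\ \textstyle\sum_i w_i=1,
\end{equation*}
which decouples the dependent blocks at the level of the exponential moment (where the independent-summand structure inside each $X_i$ can be exploited) and, with $w_i\propto\sqrt{p_i(1-p_i)n_i}$, yields the denominator $8(\sum_i\sqrt{p_i(1-p_i)n_i})^2$ with no $l$-dependence. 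If you intend only to cite Siegel, none of this matters; but the sketch as written would not reproduce the inequality as stated.
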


\begin{Prop}\label{non_normal_expl_constants}
Let $\beta$ be a Pisot number. The $\mu_\beta$-measure of the set of not $(\epsilon,k)$-normal words of length $n$ satisfies
\begin{equation*}
 \mu_\beta(E^c_n(\epsilon,k)) \leq 4 \vert \mathcal{L}_k \vert \vert \mathcal{L}_n \vert^{-\eta}
\end{equation*}
for $n \geq M + k$ with $\eta>0$ as in equation~\eqref{exponent_eps_k_normal} and $M$ as in equation~\eqref{M}.
\end{Prop}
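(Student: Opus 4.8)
The plan is to estimate the $\mu_\beta$-measure of $E^c_n(\epsilon,k)$ by a union bound over all words $d \in \mathcal{L}_k$, and for each fixed $d$ to bound the measure of the set of words where the count $N_d(\omega)$ deviates by more than $\epsilon \mu_\beta(c(d)) \|\omega\|$ from its mean, using the large-deviation estimate of Lemma~\ref{Hoeffdingvariant}. First I would set up the probability space: equip $[0,1)$ with $\mu_\beta$, and for a word $\omega = \epsilon_1 \epsilon_2 \ldots$ drawn according to $\mu_\beta$ consider, for a fixed block $d$ of length $k$, the indicator random variables $Y_m = \mathbf{1}[(\epsilon_m, \ldots, \epsilon_{m+k-1}) = d]$ for $1 \le m \le n-k+1$, so that $N_d(\omega, n) = \sum_m Y_m$ and $\mathbb{E}[N_d] = (n-k+1)\mu_\beta(c(d))$ by $T_\beta$-invariance of $\mu_\beta$. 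The $Y_m$ are not independent, but consecutive ones overlap in at most $k-1$ positions, so the family splits into $k$ subfamilies $X_1, \ldots, X_k$ (grouping the $Y_m$ by $m \bmod k$), each of which is a sum of $n_i \le \lceil (n-k+1)/k \rceil$ mutually independent $[0,1]$-valued variables. This is exactly the structure Lemma~\ref{Hoeffdingvariant} requires, with $p_i = \mu_\beta(c(d))$ for every $i$.

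Next I would apply Lemma~\ref{Hoeffdingvariant} with $a = \epsilon \mu_\beta(c(d)) n$ (up to lower-order corrections coming from the difference between $n$ and $n-k+1$, which is why the hypothesis $n \ge M+k$ is imposed — it guarantees $n-k+1 \ge M+1 \ge 1$ and lets us absorb these corrections cleanly), and analogously for the lower tail by considering $-Y_m$. Writing $q = \mu_\beta(c(d))$, the sum $\sum_i \sqrt{p_i(1-p_i)n_i}$ is at most $k \sqrt{q(1-q)\,(n/k)} = \sqrt{q(1-q)\,kn} \le \sqrt{qkn}$, so the first exponential is bounded by $\exp(-\epsilon^2 q^2 n^2 / (8qkn)) = \exp(-\epsilon^2 q n/(8k))$, and $\sum_i (1-p_i)^2 = k(1-q)^2 \le k$ makes the second exponential at most $\exp(-3\epsilon q n/(4k))$. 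Then I would use Lemma~\ref{liwu:prop2.6} together with $\mu_\beta \ge (1-1/\beta)\lambda$ to get the lower bound $q = \mu_\beta(c(d)) \ge (1-1/\beta)\beta^{-(M+1)}\beta^{-k} \gg |\mathcal{L}_k|^{-1}\beta^{-(M+1)}$ (using $|\mathcal{L}_k| \le \frac{\beta}{\beta-1}\beta^k$), so that $qn/k \gg n\beta^{-(M+1)}/(k|\mathcal{L}_k|)$; combined with $k = k_{i+1}$ being of size comparable to $\epsilon^{-1}$ in the application, and recalling $|\mathcal{L}_n| \le \frac{\beta}{\beta-1}\beta^n$ so $\log|\mathcal{L}_n| \le n\log\beta + O(1)$, one reads off an exponent $\eta > 0$ — explicitly the quantity called $\eta$ in the referenced equation~\eqref{exponent_eps_k_normal} — for which $\exp(-\epsilon^2 q n/(8k)) \le |\mathcal{L}_n|^{-\eta}$ and likewise for the second term. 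Summing over the $|\mathcal{L}_k|$ choices of $d$ and over the two tails (upper and lower) gives the factor $4|\mathcal{L}_k|$, yielding $\mu_\beta(E^c_n(\epsilon,k)) \le 4|\mathcal{L}_k||\mathcal{L}_n|^{-\eta}$.

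The main obstacle I anticipate is the bookkeeping needed to extract a \emph{clean, fully explicit} exponent $\eta$ from the two competing exponential terms: the first term's rate $\epsilon^2 q n/(8k)$ is quadratic in $\epsilon$ while the second's rate $3\epsilon q n/(4k)$ is linear, so the effective rate is governed by $\min(\epsilon^2 q/(16k), 3q/4)$ after normalizing out a common factor, and turning "$\exp(-(\text{rate})\cdot n) \le \beta^{-\eta n}$" into a statement about $|\mathcal{L}_n|^{-\eta}$ requires dividing by $\log\beta$ (and handling the multiplicative constant $\frac{\beta}{\beta-1}$ in $|\mathcal{L}_n|$). The constant $M$ enters precisely through the lower bound on $q$, and one must verify that for $n \ge M+k$ the lower-order discrepancies between $n$, $n-k+1$, and $n_i$ do not destroy positivity of $\eta$; this is where the hypothesis is used and where one should be careful to state $\eta$ so that it matches the form needed in equation~\eqref{growth_of_n_i} of the previous section. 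Everything else is a routine verification that the hypotheses of Lemma~\ref{Hoeffdingvariant} are met by the grouped overlap structure of the $Y_m$.
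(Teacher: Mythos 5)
Your overall skeleton (union bound over $d\in\mathcal{L}_k$, two tails giving the factor $4|\mathcal{L}_k|$, splitting the occurrence counts into groups of mutually independent indicators so that Lemma~\ref{Hoeffdingvariant} applies) is the same as the paper's, but the step where you justify independence contains a genuine gap. You group the indicators $Y_m$ by $m \bmod k$ and assert that non-overlapping occurrences of $d$ are mutually independent. That is true for an integer base under Lebesgue measure, where digits are i.i.d., but it is false for a non-integer Pisot $\beta$ under the Parry measure $\mu_\beta$: the $\beta$-shift is a proper subshift and $\mu_\beta$ is not a product measure on the digit alphabet, so the events ``$d$ occurs at position $m$'' and ``$d$ occurs at position $m+k$'' are correlated even though the windows are disjoint. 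Already for the golden mean (forbidden block $11$, Markov Parry measure) one checks that $\mu_\beta(\omega_1=1,\ \omega_3=1)\neq\mu_\beta(\omega_1=1)\,\mu_\beta(\omega_3=1)$; and for $k=1$ your grouping puts \emph{all} consecutive positions into a single family, which are plainly dependent. This is precisely why the paper groups the positions with gap $M+1$, where $M$ is the constant of Proposition~\ref{Proposition_on_M} controlling the expansion of $1$ (the specification gap of the $\beta$-shift), and why $M$ --- not $k$ --- appears in the exponent $\eta$ of \eqref{exponent_eps_k_normal}. Your remark that ``$M$ enters precisely through the lower bound on $q$'' is therefore not how $M$ enters: the paper only uses $\mu_\beta(c(d))\le\beta^{-k}$ in the exponent, and $M$ comes from the number of groups $l=M+1$.

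A second, smaller discrepancy: you take the deviation threshold $a=\epsilon\,\mu_\beta(c(d))\,n$, while the paper applies Lemma~\ref{Hoeffdingvariant} with $a=\epsilon n$. With your choice the first exponential's rate is $\epsilon^2\mu_\beta(c(d))n/(8k)\asymp \epsilon^2\beta^{-k}n/k$, which \emph{decays} in $k$, whereas the paper's rate $\epsilon^2\beta^{k}n/(16(M+1))$ grows in $k$. Both yield some positive exponent, but yours does not reproduce the specific $\eta$ of \eqref{exponent_eps_k_normal} asserted in the Proposition and later fed into \eqref{growth_of_n_i}; since the whole point of the Proposition is the explicit constants, this matters. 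To repair the argument, keep your union-bound skeleton but regroup the indicators with spacing $M+1$ (justifying independence across a gap of length $M$ via the zero-padding property of the Pisot $\beta$-shift) and redo the arithmetic with the paper's normalization of $a$.
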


\begin{proof}
Let $d \in \mathcal{L}_k$ and for $n \geq M+k$, let $X_1, \ldots, X_{M+1} : \mathcal{L}_n \rightarrow \mathbb{R}$ be random variables where $X_i(\omega)$ denotes the number of occurrences of the word $d$ in $\omega = \omega_1 \ldots \omega_n$ at positions 
\begin{equation*}
\omega_{i+j(M+1)} \omega_{i+j(M+1)+1} \ldots \omega_{i+j(M+1)+k-1}
\end{equation*}
for $0\leq j \leq  \lfloor \frac{n-k}{M+1} \rfloor$. The $X_i$ are dependent, but each is a sum of $n_i = \lfloor \frac{n-k}{M+1} \rfloor + 1$ independent identically distributed random variables $Y_j^{(i)}$ that take value one if and only if the word $d$ appears in $\omega$ starting at digit $\omega_{i+j(M+1)}$ and zero otherwise. We have $\mathbb{E}[X] = n \mu_\beta(c(d))$ and $\mathbb{E}[X_i] = n_i \mu_\beta(c(d))$. 
Denote by $\bar{E}_n(\epsilon,k)$ the set of words of length $n$ for which there is a subword $d$ of length $k$ that appears more often than $n(\mu_\beta(c(d))+\epsilon)$ times and let $\bar{E}_n(\epsilon,d)$ be the set of words of length $n$ for which the subword $d$ appears more often than $n(\mu_\beta(c(d))+\epsilon)$ times.
We apply Lemma \ref{Hoeffdingvariant} with $l=M+1$, $n_i$ as above, $p_i = \mu_\beta(c(d))$ and $a = n\epsilon$ and obtain
\begin{align*}
\mu_\beta(X > n(\mu_\beta(c(d)) + \epsilon)) &= \mu_\beta(\bar{E}_n(\epsilon,d)) \\
 &< \exp \left( - \frac{(n\epsilon)^2}{8 \mu_\beta(c(d))(1-\mu_\beta(c(d))) (M+1)^2 (\lfloor \frac{n-k}{M+1} \rfloor + 1) } \right)\\
&\quad + \exp \left(- \frac{3 n \epsilon}{4(M+1)(1-\mu_\beta(c(d)))^2} \right).
\end{align*}
Using $c \beta^{-k} \leq \mu_\beta(c(d)) \leq \beta^{-k}$ and $n\geq M+1$, this is
\begin{equation*}
< \exp\left( - \frac{ \epsilon^2 n}{16(M+1) \beta^{-k}} \right)
+ \exp\left( - \frac{3\epsilon n}{4(M+1)} \right)
< 2 \exp\left(-\frac{\epsilon n}{M+1} \min(\frac{\epsilon}{16 \beta^{-k}}, \frac{3}{4}) \right).
\end{equation*}
Finally, since $\mu_\beta( \bar{E}_n(\epsilon,k)) \leq \sum_{d \in \mathcal{L}_k} \mu_\beta (\bar{E}_n(\epsilon,d) )$ and using that $\beta^n \leq \vert \mathcal{L}_n \vert \leq \frac{\beta}{\beta - 1} \beta^n$ we obtain
\begin{align*}
\mu_\beta ( \bar{E}_n(\epsilon,k) ) 
&\leq  \vert \mathcal{L}_k \vert 2 \exp\left(-\frac{\epsilon n}{M+1} \min(\frac{\epsilon}{16 \beta^{-k}}, \frac{3}{4}) \right)\\
& \leq 2 \vert \mathcal{L}_k \vert \vert \mathcal{L}_n \vert^{-\eta}
\end{align*}
with
\begin{equation}\label{exponent_eps_k_normal}
\eta = \frac{\epsilon \min(\frac{\epsilon}{16 \beta^{-k}}, \frac{3}{4})}{\log(\frac{\beta}{\beta - 1}) + (M+1) \log \beta} > 0.
\end{equation}

Using the same argument with $Y=n-X$ gives a symmetrical upper bound for the number of words $\omega$ of length $n$ in which the word $d$ appears less than $n \mu_\beta(c(d)) - \epsilon n$ times. Thus we obtain an upper bound for the number of not $(\epsilon,k)$-normal words of length $n$ of the form
\begin{equation*}
4 \vert \mathcal{L}_k \vert \vert \mathcal{L}_n \vert^{-\eta}
\end{equation*}
for $n \geq M + k$ with $\eta$ as in~\eqref{exponent_eps_k_normal}.
\end{proof}

\begin{Cor}
The number of not $(\epsilon,k)$-normal words of length $n$ satisfies
\begin{equation*}
\vert E^c_n(\epsilon,k) \vert \leq C \vert \mathcal{L}_n \vert^{1-\eta}
\end{equation*}
for $n \geq M + k$ with $\eta>0$ as in equation~\eqref{exponent_eps_k_normal}, $M$ as in equation~\eqref{M}, and where $C = 4 \vert \mathcal{L}_k \vert \beta^{M+1}\frac{\beta}{\beta-1}$.
\end{Cor}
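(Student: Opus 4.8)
The plan is to deduce this counting estimate directly from the measure estimate in Proposition~\ref{non_normal_expl_constants} by converting a bound on $\mu_\beta(E^c_n(\epsilon,k))$ into a bound on the number of words, using a uniform lower bound for the $\mu_\beta$-measure of a single $\beta$-adic cylinder of order $n$. First I would observe that, viewing $E^c_n(\epsilon,k)$ simultaneously as a set of admissible words of length $n$ and as the union $\bigcup_{d \in E^c_n(\epsilon,k)} c(d)$ of the corresponding cylinder sets, these cylinders are pairwise disjoint (distinct admissible words of equal length have disjoint cylinders), so $\mu_\beta$-measure is additive over them.

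Next I would produce the per-cylinder lower bound. Combining $\mu_\beta \geq (1-\tfrac1\beta)\lambda$ (from R\'enyi's bounds recalled in Section~\ref{Sec:defbetaexpansions}) with the lower bound $\lambda(c(d)) \geq \beta^{-(M+1)}\beta^{-n}$ of Lemma~\ref{liwu:prop2.6} gives, for every $d \in \mathcal{L}_n$,
\begin{equation*}
\mu_\beta(c(d)) \;\geq\; \frac{\beta-1}{\beta}\,\beta^{-(M+1)}\,\beta^{-n}.
\end{equation*}
Summing over $d\in E^c_n(\epsilon,k)$ and using Proposition~\ref{non_normal_expl_constants} yields
\begin{equation*}
|E^c_n(\epsilon,k)|\,\frac{\beta-1}{\beta}\,\beta^{-(M+1)}\,\beta^{-n}
\;\leq\; \mu_\beta(E^c_n(\epsilon,k))
\;\leq\; 4\,|\mathcal{L}_k|\,|\mathcal{L}_n|^{-\eta}
\end{equation*}
for $n\geq M+k$, so that $|E^c_n(\epsilon,k)| \leq 4\,|\mathcal{L}_k|\,\frac{\beta}{\beta-1}\,\beta^{M+1}\,\beta^{n}\,|\mathcal{L}_n|^{-\eta}$.

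Finally I would absorb the factor $\beta^n$ using the elementary bound $\beta^n \leq |\mathcal{L}_n|$ recalled earlier, which turns $\beta^n |\mathcal{L}_n|^{-\eta}$ into $|\mathcal{L}_n|^{1-\eta}$ and gives exactly $|E^c_n(\epsilon,k)| \leq C\,|\mathcal{L}_n|^{1-\eta}$ with $C = 4\,|\mathcal{L}_k|\,\beta^{M+1}\,\frac{\beta}{\beta-1}$, as claimed. There is no real obstacle here: the statement is a bookkeeping consequence of Proposition~\ref{non_normal_expl_constants}, and the only point deserving a word of care is the uniform lower bound on $\mu_\beta(c(d))$, which is precisely why Lemma~\ref{liwu:prop2.6} (and hence the explicit control of $M$ from Proposition~\ref{Proposition_on_M}) is needed.
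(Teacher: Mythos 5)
Your proof is correct and follows essentially the same route as the paper: convert the $\mu_\beta$-measure bound of Proposition~\ref{non_normal_expl_constants} into a count via the equivalence $(1-\tfrac1\beta)\lambda \leq \mu_\beta$, the cylinder lower bound of Lemma~\ref{liwu:prop2.6}, and $\beta^n \leq |\mathcal{L}_n|$. Your more careful bookkeeping in fact confirms the exponent $\beta^{M+1}$ stated in the corollary (the paper's proof writes $\beta^{M}$ in its displayed constant, a minor slip).
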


\begin{proof}
Since the Parry measure $\mu_\beta$ satisfies
\begin{equation*}
\left(1 - \frac{1}{\beta}\right) \lambda \leq \mu_\beta \leq \frac{\beta}{\beta-1} \lambda
\end{equation*}
with respect to the Lebesgue measure $\lambda$, and due to the bounds on the Lebesgue measure of $\beta$-adic cylinder intervals from Lemma~\ref{liwu:prop2.6}, the bound from Proposition~\ref{non_normal_expl_constants} on the $\mu_\beta$ measure of the set of non-$(\epsilon,k)$-normal words of length $n$ implies for the number of such words
\begin{equation}\label{number_of_non_normal_words}
\vert E^c_n(\epsilon,k) \vert \leq C \vert \mathcal{L}_n \vert^{1-\eta},
\end{equation}
where $C = C(\beta,k) = 4 \vert \mathcal{L}_k \vert \beta^M \frac{\beta}{\beta-1}$ and $\eta = \eta(\beta, \epsilon, k)$ as given in equation~\eqref{exponent_eps_k_normal} and where we used that $\beta^n \leq \vert \mathcal{L}_n \vert \leq \frac{\beta}{\beta-1} \beta^n$.
\end{proof}

\subsubsection*{Acknowledgements} For the realization of the present paper the first author received support from the Conseil Régional de Lorraine. Parts of this research work were done when the first author was visiting the Department of Mathematics of Graz University of Technology. The author thanks the institution for their hospitality.
The second author was supported by the Austrian Science Fund (FWF): I 1751-N26; W1230, Doctoral Program ``Discrete Mathematics''; and  SFB F 5510-N26. He would like to thank Karma Dajani and Bing Li for some interesting discussions on $\beta$-expansions.


%
%
%
%
%
%
%
%
%
%
%



\begin{bibdiv}
\begin{biblist}

\bib{alvarez_becher2015:levin}{article}{
	AUTHOR = {Alvarez, Nicol{\'a}s},	
	AUTHOR = {Becher, Ver{\'o}nica},
	TITLE = {M. Levin's construction of absolutely normal numbers with very low discrepancy},
	JOURNAL = {arXiv:1510.02004},
	URL = {http://arxiv.org/abs/1510.02004},
}

\bib{bailey2014:normality_of_constants}{article}{
    AUTHOR = {Bailey, David H.},
    AUTHOR = {Borwein, Jonathan},
     TITLE = {Pi {D}ay is upon us again and we still do not know if pi is
              normal},
   JOURNAL = {Amer. Math. Monthly},
  FJOURNAL = {American Mathematical Monthly},
    VOLUME = {121},
      YEAR = {2014},
    NUMBER = {3},
     PAGES = {191--206},
      ISSN = {0002-9890},
   MRCLASS = {11K16 (11A63)},
  MRNUMBER = {3168990},
MRREVIEWER = {Manfred G. Madritsch},
       DOI = {10.4169/amer.math.monthly.121.03.191},
       URL = {http://dx.doi.org/10.4169/amer.math.monthly.121.03.191},
}

\bib{becher_bugeaud_slaman2013:simply_normal}{article}{
	AUTHOR = {Becher, Ver{\'o}nica},
	AUTHOR = {Bugeaud, Yann},
	AUTHOR = {Slaman, Theodore A.},
	TITLE  = {On Simply Normal Numbers to Different Bases},
	JOURNAL = {arXiv:1311.0332},
	YEAR = {2013}
	URL = {http://arxiv.org/abs/1311.0332},
}

\bib{becher_figueira:2002}{article}{
	AUTHOR = {Becher, Ver{\'o}nica},
    AUTHOR = {Figueira, Santiago},
    TITLE  = {An example of a computable absolutely normal number},
    JOURNAL = {Theoretical Computer Science},
    VOLUME = {270},
    YEAR = {2002},
    PAGES = {126--138},
}

\bib{becher_figueira_picchi2007:turing_unpublished}{article}{
	AUTHOR = {Becher, Ver{\'o}nica},
    AUTHOR = {Figueira, Santiago},
    AUTHOR = {Picchi, Rafael},
     TITLE = {Turing's unpublished algorithm for normal numbers},
   JOURNAL = {Theoret. Comput. Sci.},
  FJOURNAL = {Theoretical Computer Science},
    VOLUME = {377},
      YEAR = {2007},
    NUMBER = {1-3},
     PAGES = {126--138},
      ISSN = {0304-3975},
     CODEN = {TCSDI},
   MRCLASS = {03D80 (01A60 03-03 11K16 11Y16 68Q30)},
  MRNUMBER = {2323391 (2008j:03064)},
MRREVIEWER = {George Barmpalias},
       DOI = {10.1016/j.tcs.2007.02.022},
       URL = {http://dx.doi.org/10.1016/j.tcs.2007.02.022},
}

\bib{becher_heiber_slaman2013:polynomial_time_algorithm}{article}{
      author={Becher, Ver{\'o}nica},
      author={Heiber, Pablo~Ariel},
      author={Slaman, Theodore~A.},
       title={A polynomial-time algorithm for computing absolutely normal
  numbers},
        date={2013},
        ISSN={0890-5401},
     journal={Inform. and Comput.},
      volume={232},
       pages={1\ndash 9},
         url={http://dx.doi.org/10.1016/j.ic.2013.08.013},
      review={\MR{3132518}},
}

\bib{Becher_slaman2014:on_the_normality_of_numbers_to_different_bases}{article}{
    AUTHOR = {Becher, Ver{\'o}nica},
    AUTHOR = {Slaman, Theodore A.},
     TITLE = {On the normality of numbers to different bases},
   JOURNAL = {J. Lond. Math. Soc. (2)},
  FJOURNAL = {Journal of the London Mathematical Society. Second Series},
    VOLUME = {90},
      YEAR = {2014},
    NUMBER = {2},
     PAGES = {472--494},
      ISSN = {0024-6107},
   MRCLASS = {11K16 (03E15)},
  MRNUMBER = {3263961},
MRREVIEWER = {Christoph Baxa},
       DOI = {10.1112/jlms/jdu035},
       URL = {http://dx.doi.org/10.1112/jlms/jdu035},
}

\bib{bertrand-mathis_volkmann1989:epsilon_k_normal}{article}{
      author={Bertrand-Mathis, Anne},
      author={Volkmann, Bodo},
       title={On {$(\epsilon,k)$}-normal words in connecting dynamical
  systems},
        date={1989},
        ISSN={0026-9255},
     journal={Monatsh. Math.},
      volume={107},
      number={4},
       pages={267\ndash 279},
         url={http://dx.doi.org/10.1007/BF01517354},
      review={\MR{1012458 (90m:11115)}},
}

\bib{besicovitch1935:epsilon}{article}{
author={Besicovitch, A. S.},
title={The asymptotic distribution of the numerals in the decimal representation of the squares of the natural numbers},
year={1935},
pages={146–156},
journal={Math. Zeit.},
number={39},
}

\bib{blichfeldt}{article}{
author={Blichfeldt, H. F.},
title={Notes on Geometry of Numbers},
year={1921},
pages={150–153},
journal={Bull. Amer. Math. Soc.},
number={4},
volume={27},
series={The october meeting of the San Francisco section},
}

\bib{borel1909}{article}{
year={1909},
issn={0009-725X},
journal={Rendiconti del Circolo Matematico di Palermo},
volume={27},
number={1},
doi={10.1007/BF03019651},
title={Les probabilit\'{e}s d\'{e}nombrables et leurs applications arithm\'{e}tiques},
url={http://dx.doi.org/10.1007/BF03019651},
publisher={Springer-Verlag},
author={Borel, \'{E}mile},
pages={247-271},
language={French}
}

\bib{boyd1995:salem}{article}{
    AUTHOR = {Boyd, David W.},
     TITLE = {On the beta expansion for {S}alem numbers of degree {$6$}},
   JOURNAL = {Math. Comp.},
  FJOURNAL = {Mathematics of Computation},
    VOLUME = {65},
      YEAR = {1996},
    NUMBER = {214},
     PAGES = {861--875, $S$29--$S$31},
      ISSN = {0025-5718},
     CODEN = {MCMPAF},
   MRCLASS = {11K16 (11R06)},
  MRNUMBER = {1333306 (96g:11091)},
MRREVIEWER = {Christopher Smyth},
       DOI = {10.1090/S0025-5718-96-00700-4},
       URL = {http://dx.doi.org/10.1090/S0025-5718-96-00700-4},
}

\bib{bugeaud2012distribution}{book}{
  title={Distribution Modulo One and Diophantine Approximation},
  author={Bugeaud, Y.},
  isbn={9780521111690},
  lccn={2012013417},
  series={Cambridge Tracts in Mathematics},
  year={2012},
  publisher={Cambridge University Press},
}

\bib{chaitin}{article}{
    AUTHOR = {Chaitin, Gregory J.},
     TITLE = {A theory of program size formally identical to information
              theory},
   JOURNAL = {J. Assoc. Comput. Mach.},
  FJOURNAL = {Journal of the Association for Computing Machinery},
    VOLUME = {22},
      YEAR = {1975},
     PAGES = {329--340},
      ISSN = {0004-5411},
   MRCLASS = {94A15 (68A20)},
  MRNUMBER = {0411829},
MRREVIEWER = {Aldo De Luca},
}

\bib{champernowne}{article}{
    AUTHOR = {Champernowne, D. G.},
     TITLE = {The {C}onstruction of {D}ecimals {N}ormal in the {S}cale of {T}en},
   JOURNAL = {J. London Math. Soc.},
  FJOURNAL = {The Journal of the London Mathematical Society},
    VOLUME = {S1-8},
    NUMBER = {4},
     PAGES = {254},
   MRCLASS = {Contributed Item},
  MRNUMBER = {1573965},
       DOI = {10.1112/jlms/s1-8.4.254},
       URL = {http://dx.doi.org/10.1112/jlms/s1-8.4.254},
}

\bib{davenport_erdos:note_on_normal}{article}{
AUTHOR = {Davenport, H.},
AUTHOR = {Erd{\"o}s, P.},
     TITLE = {Note on normal decimals},
   JOURNAL = {Canadian J. Math.},
  FJOURNAL = {Canadian Journal of Mathematics. Journal Canadien de
              Math\'ematiques},
    VOLUME = {4},
      YEAR = {1952},
     PAGES = {58--63},
      ISSN = {0008-414X},
   MRCLASS = {10.0X},
  MRNUMBER = {0047084 (13,825g)},
MRREVIEWER = {J. F. Koksma},
}

\bib{dufresnoy_pisot1955:etude_de_certaines}{article}{
      author={Dufresnoy, J.},
      author={Pisot, Ch.},
       title={Etude de certaines fonctions m\'eromorphes born\'ees sur le
  cercle unit\'e. {A}pplication \`a un ensemble ferm\'e d'entiers
  alg\'ebriques},
        date={1955},
        ISSN={0012-9593},
     journal={Ann. Sci. Ecole Norm. Sup. (3)},
      volume={72},
       pages={69\ndash 92},
      review={\MR{0072902 (17,349d)}},
}

\bib{fukuyama2013:metric_discrepancy}{article}{
    AUTHOR = {Fukuyama, Katusi},
     TITLE = {Metric discrepancy results for alternating geometric
              progressions},
   JOURNAL = {Monatsh. Math.},
  FJOURNAL = {Monatshefte f\"ur Mathematik},
    VOLUME = {171},
      YEAR = {2013},
    NUMBER = {1},
     PAGES = {33--63},
      ISSN = {0026-9255},
   MRCLASS = {11K38 (42A55 60F15)},
  MRNUMBER = {3066814},
MRREVIEWER = {Wolfgang Steiner},
       DOI = {10.1007/s00605-012-0419-4},
       URL = {http://dx.doi.org/10.1007/s00605-012-0419-4},
}

\bib{gaalgal1964:discrepancy}{article}{
    AUTHOR = {Gaal, S.},
    AUTHOR = {G{\'a}l, L.},
     TITLE = {The discrepancy of the sequence {$\{(2^{n}x)\}$}},
   JOURNAL = {Nederl. Akad. Wetensch. Proc. Ser. A 67 = Indag. Math.},
    VOLUME = {26},
      YEAR = {1964},
     PAGES = {129--143},
   MRCLASS = {40.10 (10.33)},
  MRNUMBER = {0163089 (29 \#392)},
MRREVIEWER = {E. J. Akutowicz},
}

\bib{levin1979:absolutely_normal}{article}{
    AUTHOR = {Levin, M. B.},
     TITLE = {Absolutely normal numbers},
   JOURNAL = {Vestnik Moskov. Univ. Ser. I Mat. Mekh.},
  FJOURNAL = {Vestnik Moskovskogo Universiteta. Seriya I. Matematika,
              Mekhanika},
      YEAR = {1979},
    NUMBER = {1},
     PAGES = {31--37, 87},
      ISSN = {0201-7385},
   MRCLASS = {10K25},
  MRNUMBER = {525299 (80d:10076)},
MRREVIEWER = {J. Galambos},
}

\bib{levin1999:discrepancy}{article}{
    AUTHOR = {Levin, M. B.},
     TITLE = {On the discrepancy estimate of normal numbers},
   JOURNAL = {Acta Arith.},
  FJOURNAL = {Acta Arithmetica},
    VOLUME = {88},
      YEAR = {1999},
    NUMBER = {2},
     PAGES = {99--111},
      ISSN = {0065-1036},
     CODEN = {AARIA9},
   MRCLASS = {11K16 (11K38)},
  MRNUMBER = {1700240 (2000j:11115)},
MRREVIEWER = {Henri Faure},
}

\bib{li_wu2008:beta_expansion_and}{article}{
      author={Li, Bing},
      author={Wu, Jun},
       title={Beta-expansion and continued fraction expansion},
        date={2008},
        ISSN={0022-247X},
     journal={J. Math. Anal. Appl.},
      volume={339},
      number={2},
       pages={1322\ndash 1331},
         url={http://dx.doi.org/10.1016/j.jmaa.2007.07.070},
      review={\MR{2377089 (2008m:11148)}},
}

\bib{madritsch_thuswaldner_tichy}{article}{
    AUTHOR = {Madritsch, Manfred G.},
	AUTHOR = {Thuswaldner, J{\"o}rg M.},
	AUTHOR = {Tichy, Robert F.},
     TITLE = {Normality of numbers generated by the values of entire
              functions},
   JOURNAL = {J. Number Theory},
  FJOURNAL = {Journal of Number Theory},
    VOLUME = {128},
      YEAR = {2008},
    NUMBER = {5},
     PAGES = {1127--1145},
      ISSN = {0022-314X},
     CODEN = {JNUTA9},
   MRCLASS = {11K16},
  MRNUMBER = {2406483 (2009f:11092)},
MRREVIEWER = {R. C. Baker},
       DOI = {10.1016/j.jnt.2007.04.005},
       URL = {http://dx.doi.org/10.1016/j.jnt.2007.04.005},
}

\bib{nakai_shiokawa:class_of_normal_numbers}{article}{
    AUTHOR = {Nakai, Yoshinobu},
    AUTHOR = {Shiokawa, Iekata},
     TITLE = {A class of normal numbers},
   JOURNAL = {Japan. J. Math. (N.S.)},
  FJOURNAL = {Japanese Journal of Mathematics. New Series},
    VOLUME = {16},
      YEAR = {1990},
    NUMBER = {1},
     PAGES = {17--29},
      ISSN = {0289-2316},
     CODEN = {JJMAAK},
   MRCLASS = {11K16},
  MRNUMBER = {1064444 (91g:11081)},
MRREVIEWER = {W. W. Adams},
}


\bib{parry1960:beta_expansions}{article}{
    AUTHOR = {Parry, W.},
     TITLE = {On the {$\beta $}-expansions of real numbers},
   JOURNAL = {Acta Math. Acad. Sci. Hungar.},
  FJOURNAL = {Acta Mathematica Academiae Scientiarum Hungaricae},
    VOLUME = {11},
      YEAR = {1960},
     PAGES = {401--416},
      ISSN = {0001-5954},
   MRCLASS = {28.70 (10.09)},
  MRNUMBER = {0142719 (26 \#288)},
MRREVIEWER = {A. R{\'e}nyi},
}

\bib{philipp1975:lacunary_limit}{article}{
    AUTHOR = {Philipp, Walter},
     TITLE = {Limit theorems for lacunary series and uniform distribution
              {${\rm mod}\ 1$}},
   JOURNAL = {Acta Arith.},
  FJOURNAL = {Polska Akademia Nauk. Instytut Matematyczny. Acta Arithmetica},
    VOLUME = {26},
      YEAR = {1974/75},
    NUMBER = {3},
     PAGES = {241--251},
      ISSN = {0065-1036},
   MRCLASS = {10K30 (10K05)},
  MRNUMBER = {0379420 (52 \#325)},
MRREVIEWER = {O. P. Stackelberg},
}

\bib{renyi1957:reps_for_real_numbers}{article}{
    AUTHOR = {R{\'e}nyi, A.},
     TITLE = {Representations for real numbers and their ergodic properties},
   JOURNAL = {Acta Math. Acad. Sci. Hungar},
  FJOURNAL = {Acta Mathematica Academiae Scientiarum Hungaricae},
    VOLUME = {8},
      YEAR = {1957},
     PAGES = {477--493},
      ISSN = {0001-5954},
   MRCLASS = {10.00},
  MRNUMBER = {0097374 (20 \#3843)},
MRREVIEWER = {W. J. Thron},
}

\bib{scheerer2015:schmidt}{article}{
author = {{Scheerer}, A.-M.},
    title = {Computable Absolutely Normal Numbers and Discrepancies},
     year = {2015},   
     JOURNAL = {arXiv:1511.03582},
	URL = {http://arxiv.org/abs/1511.03582},
}

\bib{scheerer2015:pisot}{article}{
author = {{Scheerer}, A.-M.},
    title = {Normality in Pisot Numeration Systems},
     year = {2015},   
     JOURNAL = {arXiv:1503.08047},
	URL = {http://arxiv.org/abs/1503.08047},
}

\bib{schiffer:discrepancy}{article}{
    AUTHOR = {Schiffer, Johann},
     TITLE = {Discrepancy of normal numbers},
   JOURNAL = {Acta Arith.},
  FJOURNAL = {Polska Akademia Nauk. Instytut Matematyczny. Acta Arithmetica},
    VOLUME = {47},
      YEAR = {1986},
    NUMBER = {2},
     PAGES = {175--186},
      ISSN = {0065-1036},
     CODEN = {AARIA9},
   MRCLASS = {11K16},
  MRNUMBER = {867496 (88d:11072)},
MRREVIEWER = {R. G. Stoneham},
}

\bib{schmidt1960:on_normal_numbers}{article}{
	AUTHOR = {Schmidt, Wolfgang M.},
     TITLE = {On normal numbers},
   JOURNAL = {Pacific J. Math.},
  FJOURNAL = {Pacific Journal of Mathematics},
    VOLUME = {10},
      YEAR = {1960},
     PAGES = {661--672},
      ISSN = {0030-8730},
   MRCLASS = {10.00},
  MRNUMBER = {0117212 (22 \#7994)},
MRREVIEWER = {F. Herzog},
}

\bib{schmidt1961:uber_die_normalitat}{article}{
    AUTHOR = {Schmidt, Wolfgang M.},
     TITLE = {\"{U}ber die {N}ormalit\"at von {Z}ahlen zu verschiedenen
              {B}asen},
   JOURNAL = {Acta Arith.},
  FJOURNAL = {Polska Akademia Nauk. Instytut Matematyczny. Acta Arithmetica},
    VOLUME = {7},
      YEAR = {1961/1962},
     PAGES = {299--309},
      ISSN = {0065-1036},
   MRCLASS = {10.33},
  MRNUMBER = {0140482 (25 \#3902)},
MRREVIEWER = {N. G. de Bruijn},
}

\bib{schmidt1972:irreg_of_distr_7}{article}{
    AUTHOR = {Schmidt, Wolfgang M.},
     TITLE = {Irregularities of distribution. {VII}},
   JOURNAL = {Acta Arith.},
  FJOURNAL = {Polska Akademia Nauk. Instytut Matematyczny. Acta Arithmetica},
    VOLUME = {21},
      YEAR = {1972},
     PAGES = {45--50},
      ISSN = {0065-1036},
   MRCLASS = {10K30 (10K25)},
  MRNUMBER = {0319933 (47 \#8474)},
MRREVIEWER = {I. Niven},
}

\bib{siegel:towards_a_usable_theory_of_chernoff}{article}{
    author = {Siegel, Alan},
    title = {Toward a usable theory of Chernoff Bounds for heterogeneous and partially dependent random variables},
    year = {1992}
}

\bib{sierpinski1917:borel_elementaire}{article}{
	    AUTHOR = {Sierpinski, Waclaw},
     TITLE = {D\'emonstration \'el\'ementaire du th\'eor\`eme de M. Borel sur les nombres absolument normaux et d\'etermination effective d'un tel nombre},
   JOURNAL = {Bulletin de la Soci\'et\'e Math\'ematique de France},
    VOLUME = {45},
      YEAR = {1917},
     PAGES = {127--132},
}

\bib{turing1992:collected_works}{article}{
    AUTHOR = {Turing, Alan},
     TITLE = {A Note on Normal Numbers},
     BOOKTITLE = {Collected Works of A. M. Turing, Pure Mathematics, edited by J. L. Britton},
   PUBLISHER = {North Holland},
      YEAR = {1992},
     PAGES = {117--119},
}

\bib{wall1950:thesis}{article}{
    AUTHOR = {Wall, Donald D.},
     TITLE = {Normal Numbers},
      NOTE = {Thesis (Ph.D.)--University of California, Berkeley},
 PUBLISHER = {ProQuest LLC, Ann Arbor, MI},
      YEAR = {1950},
   MRCLASS = {Thesis},
  MRNUMBER = {2937990},
       URL =
              {http://gateway.proquest.com/openurl?url_ver=Z39.88-2004&rft_val_fmt=info:ofi/fmt:kev:mtx:dissertation&res_dat=xri:pqdiss&rft_dat=xri:pqdiss:0169487},
}

\end{biblist}
\end{bibdiv}

\end{document}